\definecolor{LightCyan}{rgb}{0.88,1,1}
\definecolor{Gray}{gray}{0.85}
\newtheorem{lemma}{Lemma}[section]
\newtheorem{remark}{Remark}[section]
\newtheorem{proposition}{Proposition}[section]
\newtheorem{theorem}{Theorem}[section]
\newtheorem{problem}{Problem}
\def\CT{\mathcal{T}}
\def\E{K}
\def\G{\Gamma}
\def\HuO{H^1(\O)}
\def\Hcd{H_0^2(\O)}
\def\LO{L^2(\O)}
\def\N{\mathbb{N}}
\def\O{\Omega}
\def\P{\mathbb{P}}
\def\R{\mathbb{R}}
\def\dim{\mathop{\mathrm{\,dim}}\nolimits}
\def\Hcd{H_0^2(\O)}
\def\hdel{\widehat{\delta}}
\def\l{\lambda}
\def\sp{\mathop{\mathrm{sp}}\nolimits}
\def\sumkth{\sum_{K\in \mathcal{T}_h}}
\def\Vh{V_h}
\def\VK{V^{\E}_h}
\def\div{\mathop{\mathrm{\,div}}\nolimits}
\def\dite{\mathop{\bf{div}}\nolimits}
\def\qpt{\quad\forall}
\def\qqpt{\qquad\forall}
\def\HdoO{{H_0^2(\O)}}
\def\HdsO{{H^{2+s}(\O)}}
\def\HdoK{{H^{2}(\E)}}
\journal{}
\date{\today}
\def\ve{\mathop{\mathrm{\,v}}\nolimits}
\begin{document}

\begin{frontmatter}
\title{Virtual Element for the Buckling Problem of Kirchhoff-Love plates\\
\vspace{0.5cm}
{\it\normalsize Dedicated to Rodolfo Rodr\'iguez on his 65th birthday}}

\author[1,2]{David Mora}
\ead{dmora@ubiobio.cl}
\address[1]{GIMNAP, Departamento de Matem\'atica,
Universidad del B\'io-B\'io, Concepci\'on, Chile.}
\address[2]{CI$^2$MA, Universidad de Concepci\'on, Concepci\'on, Chile.}
\author[2,3]{Iv\'an Vel\'asquez}
\ead{ivelasquez@ing-mat.udec.cl}
\address[3]{Departamento de Ingenier\'ia Matem\'atica,
Universidad de Concepci\'on, Concepci\'on, Chile.}

\begin{abstract} 

In this paper, we develop a virtual
element method (VEM) of high order to solve the fourth
order plate buckling eigenvalue problem on polygonal meshes.
We write a variational formulation based on the Kirchhoff-Love
model depending on the transverse displacement of the plate.
We propose a $C^1$ conforming virtual element discretization
of arbitrary order $k\ge2$ and we use the so-called
Babu\v ska--Osborn abstract spectral approximation
theory to show that the resulting scheme
provides a correct approximation of the
spectrum and prove optimal order error estimates
for the buckling modes (eigenfunctions) and a double
order for the buckling coefficients (eigenvalues).
Finally, we report some numerical experiments
illustrating the behaviour of the proposed scheme
and confirming our theoretical results
on different families of meshes.

\end{abstract}

\begin{keyword} 
Virtual element method 
\sep buckling eigenvalue problem
\sep Kirchhoff-Love plates 
\sep error estimates

\MSC 65N25 \sep 65N30 \sep 74K20 \sep 65N15.
\end{keyword}

\end{frontmatter}


\setcounter{equation}{0}
\section{Introduction}
\label{SEC:INTR}

In this paper we analyze a conforming $C^1$ virtual element
approximation of an eigenvalue problem arising in Structural Mechanics:
the elastic stability of plates, in particular the so-called buckling problem.
This problem has attracted much interest since it is frequently encountered
in several engineering applications such as car or aircraft design. 
In particular, we will focus on thin plates which are modeled
by the Kirchhoff--Love equations.

The buckling problem for plates can be formulated as a spectral
problem of fourth order whose solution
is related with the limit of elastic stability of the plate
(i.e., eigenvalues-buckling coefficients and eigenfunctions-buckling modes).
This problem has been studied with several finite element methods,
for instance, conforming and non-conforming discretizations, mixed formulations.
We cite as a minimal sample of them \cite{BMS2014,CWCC2017,CKD,HLM2015,I2,MM2015,MoRo2009,Ra}.

The aim of the present paper is to introduce and analyze
a virtual element method (VEM) to solve the buckling problem.
The VEM has been introduced in \cite{BBCMMR2013} and
has been applied successfully in a large range of problems in fluid
and solid mechanics; see for
instance \cite{ABMV2014,ABSV2016,AMSLP2018,BBDMR2017,BBDMR2018,BBM,BLM2015,BLV-M2AN,BBBPS2016-0,
BGS17,CG2017,CGS17,CMS2016,ChM-camwa,MPP2018,PPR15,V-m3as18,WRR2016}.
Regarding VEM for spectral problems, we mention the following
recent works~\cite{BMRR,GMV2018,GV:IMA2017,MR2018,MRR2015,MRV,MV18}.

One important advantage of VEM is the possibility of easily implement highly
regular discrete spaces to solve fourth order partial differential equations \cite{ABSV2016,BM13,ChM-camwa}.
It is very well known that the construction of conforming finite elements
to $H^2$ is difficult in general, since they
generally involve a large number of degrees of freedom (see \cite{ciarlet}).
Here, we follow the VEM approach presented in \cite{BM13,ChM-camwa}
to build global discrete spaces of $C^1$ of arbitrary order that are
simple in terms of degrees of freedom and coding aspects to solve
an eigenvalue problem modelling the plate buckling problem.

More precisely, we will propose a $C^1$ Virtual Element Method
of arbitrary order $k\ge2$ to approximate the buckling
coefficients and modes of the plate buckling problem
on general polygonal meshes.
Based on the transverse displacements of the midplane of a thin
plate subjected to a symmetric stress tensor
field, we propose and analyze a variational formulation in $H^2$.
We characterize the continuous spectrum of the problem
through a certain continuous, compact and self-adjoint operator.
Then, we exploit the ability of VEM in order to construct highly
regular discrete spaces and propose a conforming discretization of
the buckling eigenvalue problem in $H^2$ which is an extension
of the discrete virtual space introduced in \cite{ABSV2016,BM13}.
We construct projection operators in order to
write bilinear forms that are fully computable.
In particular, to discretize the right hand side of
the eigenvalue problem we propose a simple bilinear
form which does not need any stabilization.
This make possible to use directly the so-called Babu\v ska--Osborn
abstract spectral approximation theory (see \cite{BO}) to show that
under standard shape regularity assumptions the resulting
virtual element scheme provides a correct approximation of the
spectrum and prove optimal order error estimates for the
eigenfunctions and a double order for the eigenvalues.
The proposed VEM method provides an attractive and competitive
alternative to solve the fourth order plate buckling
eigenvalue problem in term of its computational cost.
For instance, in the lowest order configuration $(k=2)$,
the computational cost is almost $3N_v$, where $N_v$ denotes 
the number of vertices in the polygonal mesh. For $k=3$,
the computational cost is almost $3N_v+N_e$,
where $N_e$ denotes the number of edges in the polygonal mesh.

This paper is structured as follows: In Section~\ref{SEC:SpPrCont},
we present the variational formulation for the plate buckling eigenvalue problem.
We define a solution operator whose spectrum allows us to characterize
the spectrum of the buckling problem. In Section~\ref{SEC:DISCRETE}
we introduce the virtual element discretization of arbitrary degree $k\ge2$,
describe the spectrum of a discrete solution operator and prove
some auxiliary results. In Section~\ref{SEC:approximation},
we prove that the numerical scheme presented in this work provides
a correct spectral approximation and establish optimal order error
estimates for the eigenvalues and eigenfunctions. Finally,
in Section~\ref{SEC:Num-Res} we report some numerical tests
that confirm the theoretical analysis developed. 

Throughout the article we will use standard notations for Sobolev
spaces, norms and seminorms. Moreover, we will denote by $C$ a
generic constant independent of the mesh parameter $h$, which
may take different values in different occurrences.


\setcounter{equation}{0}
\section{Presentation of the continuous spectral problem.}
\label{SEC:SpPrCont}
Let $\O\subseteq \R^2$ be a polygonal bounded domain
corresponding to the mean surface of a plate in its
reference configuration. The plate is assumed to be homogeneous,
isotropic, linearly elastic, and sufficiently thin as to
be modeled by Kirchhoff-Love equations.
The buckling eigenvalue problem of a clamped plate, which is subjected
to a plane stress tensor field $\boldsymbol{\eta}:\O\to\R^{2\times 2}$
with $\boldsymbol{\eta}\neq 0$ reads as follows:
\begin{equation}\label{cont_pro}
\left\{\begin{array}{lll}
\Delta^2 u =-\lambda \div (\boldsymbol{\eta}\nabla u)\quad &\mbox{in }\O,\\
u=\partial_\nu u=0\quad &\mbox{on }\G.
\end{array}\right.
\end{equation}
The unknowns of this eigenvalue problem are the deflection of
the plate $u$ (buckling modes) and the eigenvalue $\lambda$
(scaled buckling coefficients).
We have denoted by $\partial_\nu$ the normal derivative.
To simplify the notation we have taken the Young modulus
and the density of the plate, both equal to 1. In addition,
the stress tensor field is assumed to satisfy the following equilibrium equations:
\begin{align}
\boldsymbol{\eta}^{\rm t}=\boldsymbol{\eta}&\quad\mbox{in }\O, \nonumber\\
\dite \boldsymbol{\eta} =0& \quad \mbox{in }\O.\nonumber
\end{align}

In the remain of this section and in Section~\ref{SEC:DISCRETE},
it is enough to consider
$\boldsymbol{\eta}\in L^{\infty}(\O)^{2\times2}$.
However, we will assume some additional regularity
which will be used in the proof of Theorem~\ref{orderconv}.
In addition, we do not need to assume $\boldsymbol{\eta}$
to be positive definite. Let us remark that,
in practice, $\boldsymbol{\eta}$ is the stress
distribution on the plate subjected to in-plane loads,
which does not need to be positive definite \cite{Timoshenko1961}.

\subsection{The continuous formulation.}
In this section we will present and analyze a variational
formulation associated with the spectral problem.
We will also introduce the so-called solution operator
whose spectra will be related to the solutions of the continuous
spectral problem \eqref{cont_pro}.

In order to write the variational formulation of the
spectral problem, we introduce the following symmetric bilinear forms in $\Hcd$:
\begin{align*}
a(u,v)
:=\int_{\O}D^2 u:\,D^2 v, \qquad
b(u,v)
:=\int_{\O}(\boldsymbol{\eta} \nabla u)\cdot \nabla v,
\end{align*}
where $":"$ denotes the usual scalar product of $2\times2$-matrices,
$D^2 v:=(\partial_{ij}v)_{1\le i,j\le2}$ denotes the Hessian
matrix of $v$. It is easy to see that $a(\cdot,\cdot)$ 
is an inner-product in $\Hcd$.

The variational formulation of the eigenvalue problem \eqref{cont_pro} is given as follows:
\begin{problem}\label{Prob1_cont_buckling}
Find $(\l,u)\in\R\times\HdoO$, $u\ne0$, such that
\begin{equation}\label{vibrapri}
\begin{array}{llll}
a(u,v) =\l b(u,v) 
\qqpt v\in\HdoO.
\end{array}
\end{equation}
\end{problem}

The following result establishes that the
bilinear form $a(\cdot,\cdot)$ is elliptic in $\Hcd$.
\begin{lemma}
\label{ha-elipt}
There exists a constant $\alpha_{0}>0$, depending on $\O$, such that
$$
a(v,v)
\ge\alpha_{0}\left\|v\right\|_{2,\O}^2
\qquad\forall v\in\HdoO.
$$
\end{lemma}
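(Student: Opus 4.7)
The plan is to observe that $a(v,v)$ equals the full $H^2$-seminorm squared, $a(v,v) = |v|_{2,\Omega}^2 = \sum_{i,j=1}^2 \|\partial_{ij} v\|_{0,\Omega}^2$, and then upgrade this seminorm bound to a full norm bound by twice invoking a Poincaré/Friedrichs inequality, exploiting the boundary conditions built into $H_0^2(\Omega)$.

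First I would recall that $v \in H_0^2(\Omega)$ means not only $v|_{\Gamma}=0$ but also $\partial_\nu v|_{\Gamma}=0$. Since $v$ vanishes on $\Gamma$, its tangential derivative on $\Gamma$ vanishes as well, so every component of $\nabla v$ has vanishing trace on $\Gamma$. Hence $\nabla v \in H_0^1(\Omega)^2$, and I can apply the standard Poincaré inequality componentwise:
\begin{equation*}
\|\nabla v\|_{0,\Omega}^2 \le C_1(\Omega) \sum_{i=1}^2 \|\nabla(\partial_i v)\|_{0,\Omega}^2 = C_1(\Omega)\, |v|_{2,\Omega}^2 = C_1(\Omega)\, a(v,v).
\end{equation*}

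Next, since $v \in H_0^1(\Omega)$, a second application of Poincaré's inequality gives $\|v\|_{0,\Omega}^2 \le C_2(\Omega)\|\nabla v\|_{0,\Omega}^2$. Combining the two bounds yields
\begin{equation*}
\|v\|_{2,\Omega}^2 = \|v\|_{0,\Omega}^2 + \|\nabla v\|_{0,\Omega}^2 + |v|_{2,\Omega}^2 \le \bigl(C_1 C_2 + C_1 + 1\bigr)\, a(v,v),
\end{equation*}
and choosing $\alpha_0 := (C_1 C_2 + C_1 + 1)^{-1}$ gives the claim.

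There is no real obstacle here; the only point requiring minor care is the first step, namely justifying that $\nabla v$ lies in $H_0^1(\Omega)^2$ so that the Poincaré inequality applies to each partial derivative. This is standard but worth stating explicitly, since it is precisely where both clamped boundary conditions $v=\partial_\nu v = 0$ get used (the normal component of $\nabla v$ vanishes by the second condition, the tangential component by differentiating the first along $\Gamma$). Everything else is a routine assembly of two Poincaré-type inequalities.
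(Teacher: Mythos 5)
Your proof is correct and follows essentially the same route as the paper, which simply cites the equivalence of $\Vert D^2 v\Vert_{0,\Omega}$ with the full $H^2$-norm on $H_0^2(\Omega)$ as a known fact; you supply the standard two-step Poincar\'e argument that establishes that equivalence. One small simplification worth noting: since $H_0^2(\Omega)$ is by definition the closure of $C_0^\infty(\Omega)$ in $H^2(\Omega)$, the inclusion $\nabla v \in H_0^1(\Omega)^2$ follows immediately by taking an approximating sequence $\phi_n \in C_0^\infty(\Omega)$ and observing $\nabla\phi_n \to \nabla v$ in $H^1(\Omega)^2$, sidestepping the trace-based argument about tangential and normal components (which is also fine, but requires $\Gamma$ to be regular enough for the trace decomposition to make sense).
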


\begin{proof}
The result follows immediately from the fact that
$\Vert D^2 v\Vert_{0,\O}$ is a norm on $\HdoO$,
equivalent with the usual norm. 
\end{proof}

\begin{remark}\label{nozero}
We have that $\lambda\ne0$ in problem \eqref{vibrapri}.
Moreover, it is easy to prove, using the symmetry of $\boldsymbol{\eta}$,
that all the eigenvalues are real (not necessarily positive).
We also have that $b(u,u)\ne0$.
\end{remark}

Next, in order to analyze the variational eigenvalue problem \eqref{vibrapri},
we introduce the following solution operator:
\begin{align*}
T:\ \HdoO & \longrightarrow \HdoO,
\\
f & \longmapsto Tf:=w,
\end{align*}
where $w\in\HdoO$ is the unique solution (as a consequence of Lemma~\ref{ha-elipt})
of the following source problem:
\begin{equation}
\label{T1}
a(w,v)=b(f,v)
\qquad\forall v\in\HdoO.
\end{equation}

We have that the linear operator $T$ is well
defined and bounded. Notice that $(\l,u)\in\R\times\HdoO$ solves
problem~\eqref{vibrapri} if and only if $Tu=\mu u$
with $\mu\neq0$ and $u\ne0$, in which case $\mu:=\frac1{\l}$.
In addition, using the symmetry of $\boldsymbol{\eta}$, we
can deduce that $T$ is self-adjoint with respect to
the inner product $a(\cdot,\cdot)$ in $\HdoO$.
Indeed, given $f,g\in\HdoO$, 
$$
a(Tf,g)
=b(f,g)
=b(g,f)
=a(Tg,f)
=a(f,Tg).
$$

On the other hand, the following is an additional
regularity result for the solution of problem~\eqref{T1}
and consequently, for the eigenfunctions of $T$.

\begin{lemma}\label{LEM:REG_buck}
There exists $s_{\Omega}>1/2$ such that the following results hold: 
\begin{itemize}
\item[(i)] 
For all $f\in\HuO$, there exists a positive constant
$C>0$ such that any solution $w$ of the source problem~\eqref{T1}
satisfies $w\in H^{2+\tilde{s}}(\O)$ with $\tilde{s}:=\min\{s_{\Omega},1\}$ and
\begin{equation*}
\Vert w\Vert_{2+\tilde{s},\O}\le C\Vert f\Vert_{1,\O}.
\end{equation*}
\item[(ii)] If $(\lambda,u)$ is an eigenpair of the spectral
problem~\eqref{vibrapri}, there exist $s>1/2$ and a positive
constant $C$ depending only on $\Omega$ such that $u\in H^{2+s}(\O)$ and
\begin{equation*}
\Vert u\Vert_{2+s,\Omega}\leq C\Vert u\Vert_{2,\Omega}.
\end{equation*}
\end{itemize}
\end{lemma}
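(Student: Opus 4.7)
The plan is to rewrite the source problem \eqref{T1} as a clamped biharmonic boundary value problem and then invoke the classical shift theorem for the biharmonic operator on polygonal domains.

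First I would pass to the strong formulation. Testing \eqref{T1} against $v\in C_c^\infty(\Omega)\subset H^2_0(\Omega)$ and integrating by parts shows that $w$ solves, in the distributional sense,
\[
\Delta^2 w = -\div(\boldsymbol{\eta}\nabla f) \qquad\text{in }\Omega, \qquad w=\partial_\nu w=0 \qquad\text{on }\Gamma.
\]
Since $\boldsymbol{\eta}\in L^\infty(\Omega)^{2\times 2}$ and $f\in H^1(\Omega)$, the vector field $\boldsymbol{\eta}\nabla f$ belongs to $L^2(\Omega)^2$, so the linear functional $v\mapsto\int_\Omega(\boldsymbol{\eta}\nabla f)\cdot\nabla v$ defines an element of $H^{-1}(\Omega)$ whose dual norm is bounded by $C\|f\|_{1,\Omega}$.

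For part (i), I would then invoke the standard regularity theory for the clamped biharmonic problem on a polygonal domain (Blum--Rannacher, Grisvard). This theory provides an index $s_\Omega>1/2$, determined by the interior angles of $\Omega$, such that the solution map for $\Delta^2$ with clamped boundary conditions and right-hand side in $H^{-1}(\Omega)$ actually takes values in $H^{2+\tilde s}(\Omega)$ with $\tilde s=\min\{s_\Omega,1\}$, and is continuous for the corresponding norms. Combining this with the $H^{-1}$ bound on $\div(\boldsymbol{\eta}\nabla f)$ yields the desired estimate $\|w\|_{2+\tilde s,\Omega}\le C\|f\|_{1,\Omega}$. For part (ii), I would observe that an eigenpair $(\lambda,u)$ of \eqref{vibrapri} satisfies $u=\lambda Tu$, so $u$ coincides with the solution of \eqref{T1} with datum $f=\lambda u\in H^2_0(\Omega)\hookrightarrow H^1(\Omega)$. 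Applying part (i) with this choice of data gives $u\in H^{2+s}(\Omega)$ with $s=\tilde s>1/2$, together with
\[
\|u\|_{2+s,\Omega}\le C|\lambda|\,\|u\|_{1,\Omega}\le C\|u\|_{2,\Omega},
\]
after absorbing the factor $|\lambda|$ (finite by Remark~\ref{nozero}) into the constant.

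The main obstacle is really a bookkeeping one: one must locate the precise polygonal-domain regularity statement that applies here, namely for the \emph{clamped} biharmonic (not simply supported), with data only in $H^{-1}(\Omega)$, and verify that the singular exponents at \emph{every} corner of $\Omega$ strictly exceed $1/2$. This last strict inequality is the delicate point, because the subsequent spectral approximation theory in Section~\ref{SEC:approximation} relies on it to obtain a genuine (positive) convergence rate; fortunately it is guaranteed by the classical corner analyses of Blum--Rannacher and Grisvard for the clamped biharmonic operator.
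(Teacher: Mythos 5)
Your argument follows the same route as the paper's one-line proof: pass to the strong clamped-biharmonic form $\Delta^2 w = -\div(\boldsymbol{\eta}\nabla f)$ and invoke the classical shift theorem for the clamped biharmonic operator on polygons (Grisvard, Blum--Rannacher), where the singular exponents at every corner strictly exceed $1/2$, so $s_\Omega>1/2$ is guaranteed. If anything, your version is slightly more careful: the paper asserts the right-hand side lies in $L^2(\Omega)$, but with $\boldsymbol{\eta}$ only in $L^\infty(\Omega)^{2\times 2}$ and $f\in H^1(\Omega)$, the forcing $-\div(\boldsymbol{\eta}\nabla f)$ is in general only in $H^{-1}(\Omega)$, and it is precisely the $H^{-1}$-data regularity statement that delivers $w\in H^{2+\tilde s}(\Omega)$ with $\tilde s=\min\{s_\Omega,1\}$, which you correctly use. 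Your derivation of part (ii) from part (i) via $u=\lambda Tu$ and absorption of $|\lambda|$ into the constant is exactly the intended argument (the constant in (ii) should be read as depending on the particular eigenvalue, which is all the spectral approximation theory in Section~\ref{SEC:approximation} requires).
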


\begin{proof}
	The proof follows from the classical regularity result for the
	biharmonic problem with its right-hand side in $\LO$ (cf. \cite{G}).
\end{proof}

Therefore, because of the compact inclusion $\HdsO\hookrightarrow\HdoO$,
$T$ is a compact operator. Thus, we finish this section with the
following spectral characterization result.

\begin{lemma}
\label{CHAR_SP_buck}
The spectrum of $T$ satisfies
$\sp(T)=\{0\}\cup\left\{\mu_k\right\}_{k\in\N}$, where
$\left\{\mu_k\right\}_{k\in\N}$ is a sequence of real
eigenvalues which converges to $0$. The multiplicity of each eigenvalue is finite.
\end{lemma}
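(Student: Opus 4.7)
The plan is to invoke the spectral theorem for compact self-adjoint operators on the Hilbert space $(\HdoO, a(\cdot,\cdot))$. Everything needed has already been assembled earlier in the section; the argument is essentially a packaging exercise.

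First I would record that $\HdoO$ is a Hilbert space with respect to $a(\cdot,\cdot)$: bilinearity and symmetry are built into the definition, and Lemma~\ref{ha-elipt} gives coercivity, hence $a(\cdot,\cdot)$ is equivalent to the usual $H^2$ inner product. Next I would recall the self-adjointness of $T$ relative to $a(\cdot,\cdot)$, which is already explicitly verified just before the statement.

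The central step is compactness of $T$. Given $f\in\HdoO$, we have $f\in\HuO$ with $\|f\|_{1,\O}\le C\|f\|_{2,\O}$, so Lemma~\ref{LEM:REG_buck}(i) applies and yields
\begin{equation*}
Tf=w\in H^{2+\tilde{s}}(\O),\qquad \|w\|_{2+\tilde s,\O}\le C\|f\|_{1,\O}\le C\|f\|_{2,\O}.
\end{equation*}
Because $\tilde{s}>1/2>0$, the inclusion $H^{2+\tilde s}(\O)\hookrightarrow\HdoO$ is compact by the Rellich--Kondrachov theorem. Hence $T$ maps bounded sets of $\HdoO$ into relatively compact sets of $\HdoO$, i.e.\ $T$ is a compact operator.

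With $T$ compact and self-adjoint on the Hilbert space $(\HdoO,a(\cdot,\cdot))$, the classical spectral theorem (see, e.g., any standard functional analysis reference) gives at once that $\sp(T)\setminus\{0\}$ consists of a (possibly finite) sequence of real eigenvalues $\{\mu_k\}_{k\in\N}$ of finite multiplicity, with $0$ as the only possible accumulation point; thus $\mu_k\to0$ if the sequence is infinite, and $0\in\sp(T)$ since $\HdoO$ is infinite dimensional. This yields exactly the claim. There is no real obstacle: the only non-trivial ingredient is the compactness, and that is handled in one line thanks to Lemma~\ref{LEM:REG_buck}(i).
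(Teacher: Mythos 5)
Your proposal is correct and follows essentially the same route the paper takes: the paper establishes self-adjointness of $T$ with respect to $a(\cdot,\cdot)$, invokes Lemma~\ref{LEM:REG_buck}(i) together with the compact inclusion $\HdsO\hookrightarrow\HdoO$ to conclude that $T$ is compact, and then leaves the spectral characterization to the classical spectral theorem for compact self-adjoint operators, which is exactly your packaging.
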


\setcounter{equation}{0}
\section{Spectral approximation.}
\label{SEC:DISCRETE}

In this section, we will write a VEM discretization
of the spectral problem \eqref{vibrapri}.
With this aim, we start with the mesh construction
and the assumptions considered to introduce the discrete
virtual element spaces.

Let $\left\{\CT_h\right\}_h$ be a sequence of decompositions of $\O$
into polygons $\E$ we will denote by $h_\E$ the diameter of the element $\E$
and $h$ the maximum of the diameters of all the elements of the mesh,
i.e., $h:=\max_{\E\in\CT_h}h_\E$.
In what follows, we denote by $N_\E$ the number of vertices of $\E$,
by $e$ a generic edge of $\left\{\CT_h\right\}_h$ and for all $e\in\partial\E$,
we define a unit normal vector $\nu_\E^e$ that points outside of $\E$.

In addition, we will make the following
assumptions as in \cite{BBCMMR2013,BMRR}:
there exists a positive real number $C_{\CT}$ such that,
for every $h$ and every $\E\in \CT_h$,
\begin{itemize}
\item[{\bf A1}:] $\E\in\CT_h$ is star-shaped with
respect to every point of a  ball
of radius $C_{\CT}h_\E$;
\item[{\bf A2}:] the ratio between the shortest edge
and the diameter $h_\E$ of $\E$ is larger than $C_{\CT}$.
\end{itemize}

In order to introduce the discretization, for every integer
$k\ge2$ and for every polygon $\E$, we define the
following finite dimensional space:
\begin{align*}
\widetilde{V}_h^K
:=\left\{v_h\in \HdoK : \Delta^2v_h\in\P_{k-2}(\E), v_h|_{\partial\E}\in C^0(\partial\E),
v_h|_e\in\P_r(e)\,\,\forall e\in\partial\E,\right.\\
\left.\nabla v_h|_{\partial\E}\in C^0(\partial\E)^2,
\partial_{\nu_K^e} v_h|_e\in\P_s(e)\,\,\forall e\in\partial\E\right\},
\end{align*}
where $r:=\max\{3,k \}$ and $s:=k-1$.

This space has been recently considered in \cite{ChM-camwa}
to obtain optimal error estimates for fourth order PDEs
and it can be seen as an extension of the $C^1$ virtual space
introduced in \cite{BM13} to solve the bending problem of thin plates.
Here, we will consider the same space together with
an enhancement technique (cf. \cite{AABMR13}) to build a computable right hand
of the buckling eigenvalue problem.

It is easy to see that any $v_h \in \widetilde{V}_h^K$
satisfies the following conditions:
\begin{itemize}
	\item the trace (and the trace of the gradient)
	on the boundary of $\E$ is continuous;	
	\item $\P_k(\E)\subseteq\widetilde{V}_h^K$.
\end{itemize}

In $\widetilde{V}_h^K$ we define the following five sets of linear operators.
For all $v_{h}\in\widetilde{V}_h^K$:
\begin{itemize}
\item[${\bf D_1}$:] evaluation of $v_{h}$ at the $N_{\E}$ vertices of $\E$;
\item[${\bf D_2}$:]  evaluation of $\nabla v_h$ at the $N_{\E}$ vertices of $\E$;
\item[${\bf D_3}$:]  {For } $r>3$, \mbox{ the moments } $\int_e q(\xi)v_h(\xi)d\xi\qquad \quad \forall q\in \mathbb{P}_{r-4}(e),
\quad\forall  \mbox{ edge  } e;\label{r_moments}$
\item[${\bf D_4}$:]  {For } $s>1$, \mbox{ the moments } $\int_e q(\xi)\partial_{\nu_K^{e}}v_h(\xi)d\xi\quad \ \forall q \in \mathbb{P}_{s-2}(e),
\quad\forall \mbox{ edge } e;\label{s_moments}$
\item[${\bf D_5}$:] {For } $k\geq 4$,  \mbox{ the moments } $\int_K q(\boldsymbol{x})v_h(\boldsymbol{x})d\boldsymbol{x}\qquad   \forall q\in \mathbb{P}_{k-4}(K),
\ \ \forall \mbox{ polygon } K \label{k_moments}$.
\end{itemize}

In order to construct the discrete scheme,
we need some preliminary definitions.
First, we note that bilinear form $a(\cdot,\cdot)$,
introduced in the previous section,
can be split as follows:
$$
a(u,v)=\sum_{\E\in\CT_h}a_{\E}(u,v),
\qquad u,v\in\HdoO,
$$
with
\begin{equation*}
\label{alocal}
a_{\E}(u,v)
:=\int_{\E}D^2 u:\,D^2 v,
\qquad u,v\in\HdoK.
\end{equation*}

Now, we define the projector
$\Pi_K^{k,D}:H^2(K)\to \mathbb{P}_k(K)\subseteq\widetilde{V}_h^K$
as the solution of the following local problems (in each element $\E$): 
\begin{subequations}
	\begin{align}
	& a_{\E}\big(\Pi_K^{k,D} v,q\big) =a_{\E}(v,q)\qquad\forall q\in\P_k(\E)\quad \forall v\in H^2(K),\label{def_Pi_KkD}\\
	&\widehat{\Pi_K^{k,D} v}=\widehat{v}, \quad \widehat{\nabla \Pi_K^{k,D} v}=\widehat{\nabla v},\label{kerPi_KkD}
	\end{align}
\end{subequations}
where $\widehat{v}$ is defined as follows:
\begin{align*}
\widehat{v}:=\frac{1}{N_K}\sum_{i=1}^{N_K}v(\ve_i)\qquad\forall v\in C^0(\partial\E)
\end{align*}
and $\ve_i, 1\le i\le N_{\E}$, are the vertices of $\E$.

We observe that bilinear form $a_{\E}(\cdot,\cdot)$
has a non-trivial kernel given by $\P_1(\E)$. Hence,
the role of condition \eqref{kerPi_KkD} is to select
an element of the kernel of the operator.

It is easy to see that operator $\Pi_{K}^{k,D}$
is well defined on $\widetilde{V}_h^K$.
Moreover, the following result states that
for all $v\in\widetilde{V}_h^K$ the polynomial
$\Pi_{K}^{k,D} v$ can be computed using the output
values of the sets ${\bf D_1}-{\bf D_5}$.

\begin{lemma}
The operator $\Pi_{K}^{k,D}:\widetilde{V}_h^K\to \mathbb{P}_k(K)$ is  explicitly computable
for every $v\in\widetilde{V}_h^K$, using only the information of the
linear operators in ${\bf D_1}-{\bf D_5}$.
\end{lemma}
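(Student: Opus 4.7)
My plan is to verify that each quantity appearing in the defining equations \eqref{def_Pi_KkD}--\eqref{kerPi_KkD} can be evaluated from the data in ${\bf D_1}$--${\bf D_5}$. The conditions in \eqref{kerPi_KkD} pin down the $\P_1(\E)$ component of $\Pi_K^{k,D}v$ (the kernel of $a_{\E}(\cdot,\cdot)$), and the averages $\widehat{v}$ and $\widehat{\nabla v}$ involve only vertex values of $v$ and $\nabla v$, which are precisely the output of ${\bf D_1}$ and ${\bf D_2}$. So the substantive task is to show that the right-hand side $a_{\E}(v,q)=\int_K D^2v:D^2q$ is computable for every $q\in\P_k(\E)$; the left-hand side is polynomial-polynomial and trivially computable.

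The core step is to integrate by parts twice, moving all derivatives off $v$ onto $q$, yielding an identity of the form
$$a_{\E}(v,q)=\int_K v\,\Delta^2 q\,dx+\sum_{e\subset\partial\E}\int_e\bigl(p^1_e(q)\,v+p^2_e(q)\,\partial_{\nu_K^e}v\bigr)\,ds,$$
where $p^1_e(q)$ and $p^2_e(q)$ are explicit polynomial combinations of the normal and tangential derivatives of $q$ along $e$. The volume integrand satisfies $\Delta^2 q\in\mathbb{P}_{k-4}(K)$, so it is identically zero when $k\in\{2,3\}$ and otherwise integrates against $v$ via the moments ${\bf D_5}$; the volume contribution is therefore always computable.

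For the boundary contributions I need to recover $v|_e$ and $\partial_{\nu_K^e}v|_e$ as explicit univariate polynomials from the data. By definition $v|_e\in\mathbb{P}_r(e)$ with $r=\max\{3,k\}$, a space of dimension $r+1$; this count is matched by the two endpoint values (${\bf D_1}$), the two endpoint tangential derivatives obtained from $\nabla v$ at the vertices (${\bf D_2}$), and, when $r>3$, the $r-3$ moments against $\mathbb{P}_{r-4}(e)$ provided by ${\bf D_3}$, giving $2+2+(r-3)=r+1$ interpolation conditions. An analogous count gives $\partial_{\nu_K^e}v|_e\in\mathbb{P}_s(e)$ with $s=k-1$ from the two endpoint normal derivatives in ${\bf D_2}$ and the $s-1$ moments in ${\bf D_4}$, totalling $s+1$ conditions. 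With these two univariate traces reconstructed, each boundary integral becomes the exact integration of a product of two known polynomials on $e$, hence is computable.

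Putting the pieces together, expanding $\Pi_K^{k,D}v$ in a basis of $\P_k(\E)$ produces a square linear system whose right-hand side is now fully computable from ${\bf D_1}$--${\bf D_5}$ and whose matrix is invertible: Lemma~\ref{ha-elipt} supplies coercivity of $a_{\E}$ on the quotient modulo $\P_1(\E)$, while \eqref{kerPi_KkD} fixes the three remaining kernel degrees of freedom. The only real obstacle is bookkeeping: organising correctly the boundary terms generated by two integrations by parts of the bi-Laplacian and matching the edge dimension counts exactly, which is precisely what motivated the enhanced trace degree $r=\max\{3,k\}$ and the normal-trace degree $s=k-1$ in the definition of $\widetilde{V}_h^K$.
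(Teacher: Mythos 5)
Your proof takes essentially the same route as the paper's: integrate by parts twice on $a_\E(v,q)$, observe that the volume term $\int_\E v\,\Delta^2 q$ with $\Delta^2 q\in\P_{k-4}(\E)$ is handled by ${\bf D_5}$, that the two boundary integrals depend only on the traces of $v$ and $\nabla v$ and are therefore computable from ${\bf D_1}$--${\bf D_4}$, and that the kernel conditions \eqref{kerPi_KkD} are fixed by ${\bf D_1}$--${\bf D_2}$. You additionally spell out the dimension counts showing that $v|_e\in\P_r(e)$ and $\partial_{\nu_\E^e}v|_e\in\P_s(e)$ are uniquely determined from the data and that the resulting linear system for $\Pi_\E^{k,D}v$ is invertible, details the paper leaves implicit but which are correct and routine.
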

\begin{proof}
For all $v_h\in\widetilde{V}_h^K$ we integrate twice by parts on
the right-hand side of \eqref{def_Pi_KkD}. We obtain 
\begin{align}
a(v_h,q)&=\int_K D^2v_h:D^2q\nonumber\\
&=\int_K \Delta^2 q v_h - \int_{\partial K} \dite (D^2 q)\cdot \nu_K v_h
+\int_{\partial K}D^2q \nu_K \cdot \nabla v_h.\label{Pi_KDcomp}
\end{align}
It is easy to see that since $\Delta^2 q \in \mathbb{P}_{k-4}(K)$
hence the first integral in the right-hand side of \eqref{Pi_KDcomp}
is computable using the output values of the set ${\bf D_5}$.
We also note that the boundary integrals of \eqref{Pi_KDcomp}
only depends on the boundary values of $v_h$ and $\nabla v_h$,
so they are computable using the output values of the sets ${\bf D_1}-{\bf D_4}$.
On the other hand, the kernel part of $\Pi_{K}^{k,D}$ (cf. \eqref{kerPi_KkD})
is computable using the output values of the sets ${\bf D_1}-{\bf D_2}$.
\end{proof}

We introduce our local virtual space:
\begin{align*}
\VK:=\left\{v_h\in\widetilde{V}_h^K : \int_{\E}(\Pi_{K}^{k,D} v_h)q
=\int_{\E}v_hq\qquad\forall q\in\P_{k-3}^{*}(\E)\cup\P_{k-2}^{*}(\E)\right\}.
\end{align*}
where $\P_{\ell}^{*}(\E)$ denotes homogeneous polynomials
of degree $\ell$ with the convention that $\P_{-1}^{*}(\E)=\{0\}$.

Note that $V_h^K\subseteq \widetilde{V}_h^K$.
Thus, the linear operator $\Pi_{K}^{k,D}$ is well
defined on $V_h^K$ and computable only using the output values
of the sets ${\bf D_1}-{\bf D_5}$.
We also have that $\mathbb{P}_k(K)\subseteq\VK$.
This will guarantee the good approximation properties of the space.

Moreover, it has been established in \cite{ChM-camwa}
that the set of linear operators ${\bf D_1}-{\bf D_5}$
constitutes a set of degrees of freedom for $V_h^K$.

Now, we consider the $L^2(K)$ orthogonal projector
onto $\P_{k-2}(\E)$ as follows:
we define $\Pi_{K}^{k-2}:L^2(K)\to\P_{k-2}(\E)$ for each $v \in L^2(K)$ by
\begin{equation}\label{fff}
\int_{\E}(\Pi_{K}^{k-2} v)q=\int_{\E}vq\qquad\forall q\in\P_{k-2}(\E).
\end{equation}
Next, due to the particular property appearing in definition
of the space $\VK$, it can be seen that the right hand
side in \eqref{fff} is computable using $\Pi_{K}^{k,D} v$,
and the degrees of freedom given by ${\bf D_5}$
and thus $\Pi_{K}^{k-2} v$ depends only on the values of
the degrees of freedom given by ${\bf D_1}-{\bf D_5}$ when $v\in \VK$.

In order to discretize the right hand side of the buckling eigenvalue
problem, we will consider the following
projector onto $\P_{k-1}(\E)^2$:
we define $\boldsymbol{\Pi}_K^{k-1}:H^1(K)\to\P_{k-1}(\E)^2$ for each $v \in H^1(K)$ by
\begin{equation*}
\int_{\E}(\boldsymbol{\Pi}_K^{k-1}\nabla v)\cdot {\bf q}=\int_{\E}\nabla v\cdot {\bf q}
\quad\forall {\bf q}\in\P_{k-1}(\E)^2.
\end{equation*}

In addition, we observe that the for any $v_h\in \VK$, the vector function
$\boldsymbol{\Pi}_K^{k-1}\nabla v_h$ can be explicitly
computed from the degrees of freedom ${\bf D_1}-{\bf D_5}$.
In fact, in order to compute $\boldsymbol{\Pi}_K^{k-1}\nabla v_h$,
for all $\E\in\CT_h$ we must be able to calculate the following:
\begin{equation*}
\int_{\E}\nabla v_h\cdot{\bf q}\qquad\forall {\bf q}\in\P_{k-1}(\E)^2.
\end{equation*}
From an integration by parts, we have
\begin{equation*}
\begin{split}
\int_{\E}\nabla v_h\cdot{\bf q}&=-\int_{\E}v_h\div{\bf q}+\int_{\partial\E}v_h({\bf q}\cdot \nu_K)
\qquad\forall {\bf q}\in\P_{k-1}(\E)^2,\\
&=-\int_{\E}\Pi_{K}^{k-2} v_h\div{\bf q} +\int_{\partial\E}v_h({\bf q}\cdot \nu_K)
\qquad\forall {\bf q}\in\P_{k-1}(\E)^2.
\end{split}
\end{equation*}
The first term on the right-hand side above depends only on 
the $\Pi_{K}^{k-2} v_h$ and this depends on the values of
the degrees of freedom ${\bf D_1}-{\bf D_5}$ (cf. \eqref{fff}).
The second term can also be computed since ${\bf q}$ is a polynomial
of degree $k-1$ on each edge and therefore
is uniquely determined by the values of ${\bf D_1}-{\bf D_5}$.

Now, we are ready to define our global virtual space
to solve the plate buckling eigenvalue problem, this is defined as follows:

\begin{equation}\label{glob_virt_spac_buck}
V_h:=\Big\{v_h\in \Hcd: v_h|_K\in V_h^K  \Big\}.
\end{equation}

In what follows, we discuss the construction
of the discrete version of the local forms.
With this aim, we consider 
$s_{K}^{D}(\cdot,\cdot)$ any symmetric positive
definite and computable bilinear form to be chosen as to satisfy:
\begin{equation}\label{term-stab-SK}
c_0 a_K(v_h,v_h)\leq s_\E^{D}(v_h,v_h)\leq
c_1 a_K(v_h,v_h)\quad \forall v_h \in V_h^K\quad \mbox{with }\quad \Pi_K^{k,D} v_h=0.
\end{equation}

Then, we set
\begin{align*}
&a_h(u_h,v_h)
:=\sum_{\E\in\CT_h}a_{h,\E}(u_h,v_h),
\qquad u_h,v_h\in\Vh,\\
&b_h(u_h,v_h)
:=\sum_{\E\in\CT_h}b_{h,\E}(u_h,v_h),
\qquad u_h,v_h\in\Vh, 
\end{align*}
with $a_{h,\E}(\cdot,\cdot)$ and  $b_{h,\E}(\cdot,\cdot)$   
are the local bilinear forms on
$\VK\times\VK$ defined by
\begin{align}
& a_{h,\E}(u_h,v_h)
:=a_{\E}\big(\Pi_K^{k,D} u_h,\Pi_K^{k,D} v_h\big)
+s_{\E}^{D}\big(u_h-\Pi_K^{k,D} u_h,v_h-\Pi_K^{k,D} v_h\big), \label{locforma1}\\[1ex]
& b_{h,\E}(u_h,v_h)
:=\int_K \boldsymbol{\eta} \boldsymbol{\Pi}_K^{k-1}\nabla  u_h\cdot \boldsymbol{\Pi}_K^{k-1} \nabla  v_h. \label{locforma4} 
\end{align}

Notice that the bilinear form $s_{\E}^{D}(\cdot,\cdot)$ has to be
actually computable for $u_h,v_h\in \VK$.

\begin{proposition}
The local bilinear form $a_{h,\E}(\cdot,\cdot)$ on each element $\E$ satisfy
\begin{itemize}
\item \textit{Consistency}: for all $h > 0$ and for all $\E\in\CT_h$, we have that
\begin{align}
a_{h,\E}(p,v_h)
=a_{\E}(p,v_h)
\qquad\forall p\in\P_k(\E),
\quad\forall v_h\in\VK,\label{consis-a} 
\end{align}
\item \textit{Stability and
boundedness}: There exist two positive constants
$\alpha_1,\alpha_2$, independent of $\E$, such that:
\begin{align}
\alpha_1 a_{\E}(v_h,v_h) &\leq a_{h,\E}(v_h,v_h) \leq\alpha_2 a_{\E}(v_h,v_h)&
\qquad\forall v_h\in\VK.\label{stab-a}
\end{align}
\end{itemize}
\end{proposition}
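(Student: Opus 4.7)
The proof will follow the standard VEM playbook, splitting the two claims and using the basic orthogonality property of the energy projector $\Pi_K^{k,D}$ together with the assumed norm equivalence \eqref{term-stab-SK} on the stabilizer.

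For the consistency part, the plan is to first verify that $\Pi_K^{k,D}$ is the identity on $\mathbb{P}_k(K)$: given $p\in\mathbb{P}_k(K)$, both $p$ and $\Pi_K^{k,D} p$ lie in $\mathbb{P}_k(K)$, solve the same variational problem \eqref{def_Pi_KkD}, and share the same $\widehat{\cdot}$ and $\widehat{\nabla\cdot}$ values via \eqref{kerPi_KkD}; by uniqueness they coincide. Hence the stabilizing term in \eqref{locforma1} vanishes because its first argument $p-\Pi_K^{k,D}p=0$. The polynomial part is then handled by invoking the definition \eqref{def_Pi_KkD} of $\Pi_K^{k,D}$ applied to $v_h$, together with the symmetry of $a_K$, to write $a_K(\Pi_K^{k,D}p,\Pi_K^{k,D}v_h)=a_K(p,\Pi_K^{k,D}v_h)=a_K(\Pi_K^{k,D}v_h,p)=a_K(v_h,p)=a_K(p,v_h)$, which gives \eqref{consis-a}.

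For the stability and boundedness, the key observation is the $a_K$-orthogonal decomposition
\begin{equation*}
a_K(v_h,v_h)=a_K(\Pi_K^{k,D}v_h,\Pi_K^{k,D}v_h)+a_K(v_h-\Pi_K^{k,D}v_h,v_h-\Pi_K^{k,D}v_h),
\end{equation*}
which follows from \eqref{def_Pi_KkD} by testing against $q=\Pi_K^{k,D}v_h\in\mathbb{P}_k(K)$. To legitimately apply \eqref{term-stab-SK} to $v_h-\Pi_K^{k,D}v_h$ I will first verify that $\Pi_K^{k,D}(v_h-\Pi_K^{k,D}v_h)=0$: the difference satisfies $a_K(v_h-\Pi_K^{k,D}v_h,q)=0$ for every $q\in\mathbb{P}_k(K)$ and has vanishing $\widehat{\cdot}$ and $\widehat{\nabla\cdot}$, so its image under $\Pi_K^{k,D}$ belongs to $\mathbb{P}_1(K)$ with zero vertex-average and zero gradient-average, forcing it to be zero.

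With that in hand, \eqref{term-stab-SK} yields
\begin{equation*}
c_0\,a_K(v_h-\Pi_K^{k,D}v_h,v_h-\Pi_K^{k,D}v_h)\leq s_K^D(v_h-\Pi_K^{k,D}v_h,v_h-\Pi_K^{k,D}v_h)\leq c_1\,a_K(v_h-\Pi_K^{k,D}v_h,v_h-\Pi_K^{k,D}v_h).
\end{equation*}
Inserting this into the definition \eqref{locforma1} of $a_{h,K}$ and combining with the orthogonal decomposition gives both bounds in \eqref{stab-a} with $\alpha_1=\min\{1,c_0\}$ and $\alpha_2=\max\{1,c_1\}$, independent of $K$. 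The only step that requires some care is checking $\Pi_K^{k,D}(v_h-\Pi_K^{k,D}v_h)=0$, since $\Pi_K^{k,D}$ is not an $L^2$-type projector and its kernel-fixing condition \eqref{kerPi_KkD} must be handled explicitly; the rest is essentially algebraic manipulation.
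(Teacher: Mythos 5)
Your argument is correct, and it is the standard VEM proof of these two properties: $\Pi_K^{k,D}$ fixes $\mathbb{P}_k(K)$ so the stabilizer vanishes in the consistency identity, the $a_K$-Pythagoras decomposition plus the observation that $\Pi_K^{k,D}(v_h-\Pi_K^{k,D}v_h)=0$ legitimizes \eqref{term-stab-SK}, and the constants $\alpha_1=\min\{1,c_0\}$, $\alpha_2=\max\{1,c_1\}$ follow. The paper itself states this proposition without a proof (it is treated as standard VEM machinery going back to the original basic-principles and plate-bending references), so there is no authored argument to compare against; your write-up supplies exactly the expected one, and you were right to flag the check that $v_h-\Pi_K^{k,D}v_h$ lies in the set where \eqref{term-stab-SK} applies as the one non-automatic step, since it hinges on the kernel-fixing condition \eqref{kerPi_KkD} rather than on $L^2$-orthogonality.
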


\subsection{The discrete eigenvalue problem.}
Now, we are in a position to write the virtual
element discretization of Problem~\ref{Prob1_cont_buckling} as follows. 
\begin{problem}\label{Prob1disc_cont_buckling}
Find $(\l_h,u_h)\in\R\times\Vh$, $u_h\ne0$, such that
\begin{equation}\label{Prob1h_buckling}
a_h(u_h,v_h)=\l_h b_h(u_h,v_h)
\qquad\forall v_h\in\Vh.
\end{equation}
\end{problem}

We observe that by virtue of \eqref{stab-a}, the
bilinear form $a_{h}(\cdot,\cdot)$ is bounded. Moreover, as shown in
the following lemma, it is also uniformly elliptic.
\begin{lemma}
\label{ha-elipt-disc}
There exists a constant $\alpha>0$, independent of $h$, such that
$$
a_{h}(v_h,v_h)
\ge\alpha\left\|v_h\right\|_{2,\O}^2
\qquad\forall v_h\in\Vh.
$$
\end{lemma}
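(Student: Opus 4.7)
The plan is to chain together three ingredients already established in the excerpt: the elementwise stability bound \eqref{stab-a} for the discrete bilinear form, the conformity $V_h\subset H^2_0(\Omega)$ built into the global space \eqref{glob_virt_spac_buck}, and the continuous coercivity given by Lemma~\ref{ha-elipt}. None of these steps requires any new estimate, so the argument is essentially a short bookkeeping.

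First I would fix an arbitrary $v_h\in V_h$ and invoke the lower bound in \eqref{stab-a} on each element $K\in\mathcal{T}_h$ to get $a_{h,K}(v_h,v_h)\ge\alpha_1\, a_K(v_h,v_h)$, with $\alpha_1>0$ independent of $K$ and $h$. Summing over the mesh and using the definitions of $a_h(\cdot,\cdot)$ and of $a(\cdot,\cdot)$ via its elementwise decomposition yields
\begin{equation*}
a_h(v_h,v_h)=\sum_{K\in\mathcal{T}_h} a_{h,K}(v_h,v_h)\;\ge\;\alpha_1\sum_{K\in\mathcal{T}_h} a_K(v_h,v_h)=\alpha_1\, a(v_h,v_h).
\end{equation*}

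Next, since the global discrete space satisfies $V_h\subset H^2_0(\Omega)$ by construction (each $v_h\in V_h$ has continuous trace and gradient across interelement edges and vanishes on $\Gamma$ together with its normal derivative), Lemma~\ref{ha-elipt} applies to $v_h$ and gives $a(v_h,v_h)\ge\alpha_0\|v_h\|_{2,\Omega}^2$. Combining the two inequalities delivers
\begin{equation*}
a_h(v_h,v_h)\;\ge\;\alpha_1\alpha_0\,\|v_h\|_{2,\Omega}^2,
\end{equation*}
so the claim holds with $\alpha:=\alpha_1\alpha_0$, which is independent of $h$ since $\alpha_1$ is (from the stability property of the VEM bilinear form under the mesh assumptions \textbf{A1}--\textbf{A2}) and $\alpha_0$ depends only on $\Omega$.

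There is no real obstacle in this proof; the only point that deserves a moment of care is the conformity $V_h\subset H^2_0(\Omega)$, which is what legitimates passing from the elementwise sum $\sum_K a_K(v_h,v_h)=\|D^2 v_h\|_{0,\Omega}^2$ to the global $H^2$-norm via Lemma~\ref{ha-elipt}. This is guaranteed by the $C^1$-conformity of the local spaces $V_h^K$ (continuity of both $v_h$ and $\nabla v_h$ across edges, built into $\widetilde V_h^K$) and the clamped boundary condition encoded in \eqref{glob_virt_spac_buck} through $V_h\subset H^2_0(\Omega)$.
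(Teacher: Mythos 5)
Your proposal is correct and follows essentially the same route as the paper: apply the lower bound of \eqref{stab-a} elementwise, sum over $\mathcal{T}_h$, and invoke the continuous coercivity of Lemma~\ref{ha-elipt}, which is licit because $V_h\subset H^2_0(\Omega)$. The only cosmetic difference is the constant: the paper records $\alpha=\alpha_0\min\{\alpha_1,1\}$ whereas your cleaner bookkeeping gives $\alpha=\alpha_0\alpha_1$, which is equally valid.
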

\begin{proof}
Thanks to \eqref{stab-a} and Lemma~\ref{ha-elipt}, it is easy to check that
the above inequality holds with
$\alpha:=\alpha_{0}\min\left\{\alpha_{1},1\right\}$.
\end{proof}

In order to analyze the discrete problem, we introduce the
solution operator associated to Problem~2 as follows:
\begin{align*}
T_h: \Hcd & \longrightarrow \Hcd,
\\
f & \longmapsto T_h f:=w_h,
\end{align*}
with $w_h$ the unique solution of the following source problem
\begin{equation}
a_h(w_h,v_h)=b_h(f,v_h)\qquad \forall v_h\in \Vh. \label{defTh_b}
\end{equation}

Note that the ellipticity of $a_h(\cdot,\cdot)$ established in Lemma~\ref{ha-elipt-disc},
the boundedness of the right hand side (cf. \eqref{locforma4}) and
Lax-Milgram Lemma guarantee that $T_h$ is well defined.
Moreover, as in the continuous case, $(\l_h,u_h)\in\R\times\Vh$
solves problem~\eqref{Prob1h_buckling} 
if and only if $T_hu_h=\mu_h u_h$ with
$\mu_h\neq0$ and $u_h\ne0$, in which case $\mu_h:=\frac1{\l_h}$.

\begin{remark}
The same arguments leading to Remark~\ref{nozero}
allow us to show that any solution of \eqref{Prob1h_buckling}
satisfies $\lambda_h\ne0$. Moreover, $b_h(u_h,u_h)\ne 0$ also
holds true.
\end{remark}

Moreover from the definition of $a_h(\cdot,\cdot)$ and $b_h(\cdot,\cdot)$
we can check that $T_h$ is self-adjoint with respect to inner product
$a_h(\cdot,\cdot)$. Therefore, we can describe the spectrum
of the solution operator $T_h$.

Now, we are in position to write the following characterization of the
spectrum of the solution operator.

\begin{theorem}
\label{CHAR_SP_DISC}
The spectrum of $T_h$ consists of $M_h:=\dim(\Vh)$ eigenvalues,
repeated according to their respective multiplicities.
The spectrum decomposes as follows: $\sp(T_h)=\{0\}\cup\{\mu_h\}_{k=1}^{\kappa}$,
where $\kappa=M_h-\dim Z_h$ with $Z_h:=\left\{u_h\in\Vh:\, b_h(u_h,v_h)=0\qpt v_h\in\Vh\right\}$.
The eigenvalues $\mu_h$ are all real and non-zero.
\end{theorem}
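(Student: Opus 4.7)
The plan is to exploit the fact that $V_h$ is finite-dimensional together with the self-adjointness of $T_h$ with respect to the inner product $a_h(\cdot,\cdot)$. Since $\dim V_h = M_h$, the operator $T_h$ has exactly $M_h$ eigenvalues counted with algebraic multiplicity; the whole game is to split them into the zero eigenvalue (with multiplicity $\dim Z_h$) and $\kappa$ nonzero real eigenvalues.

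First I would identify the kernel: $\ker(T_h)=Z_h$. The inclusion $Z_h\subseteq\ker(T_h)$ is immediate, since if $f\in Z_h$ then the right-hand side of \eqref{defTh_b} vanishes and the ellipticity of $a_h(\cdot,\cdot)$ established in Lemma~\ref{ha-elipt-disc} forces $w_h=0$, hence $T_h f=0$. Conversely, if $T_h f=0$, then $b_h(f,v_h)=a_h(T_h f,v_h)=0$ for every $v_h\in V_h$, so $f\in Z_h$.

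Next I would invoke the self-adjointness of $T_h$ with respect to $a_h(\cdot,\cdot)$, which was already noted in the paragraph preceding the theorem. Because $a_h(\cdot,\cdot)$ is an inner product on the finite-dimensional space $V_h$ (again by Lemma~\ref{ha-elipt-disc}), the finite-dimensional spectral theorem applies: $T_h$ is diagonalisable, all its eigenvalues are real, and $V_h$ decomposes as the $a_h$-orthogonal sum $V_h = Z_h \oplus Z_h^{\perp_{a_h}}$, where both subspaces are $T_h$-invariant. On $Z_h$ the operator acts as zero, which in particular shows that the algebraic multiplicity of the zero eigenvalue equals $\dim Z_h$; on $Z_h^{\perp_{a_h}}$ the restriction of $T_h$ is injective (since its kernel, contained in $\ker T_h = Z_h$, reduces to $\{0\}$) and self-adjoint, hence its $\kappa := M_h - \dim Z_h$ eigenvalues are real and nonzero. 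This gives the claimed decomposition $\mathrm{sp}(T_h)=\{0\}\cup\{\mu_h\}_{k=1}^{\kappa}$.

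The only mildly delicate point is passing from the geometric statement $\ker T_h=Z_h$ to the algebraic multiplicity $\dim Z_h$ of the eigenvalue $0$, which is exactly what self-adjointness provides; once that is in hand the rest is linear algebra. The nonzero conclusion is also consistent with the preceding remark on $\lambda_h\neq 0$: every nonzero eigenvalue $\mu_h$ corresponds to the reciprocal of an eigenvalue $\lambda_h$ of Problem~\ref{Prob1disc_cont_buckling}.
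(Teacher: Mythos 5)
Your proof is correct and follows the route the paper leaves implicit: the paper states the theorem without a written proof, gesturing only at the self-adjointness of $T_h$ with respect to $a_h(\cdot,\cdot)$ established in the preceding paragraph, which is exactly what your argument exploits. You fill in the details carefully and correctly: the kernel identification $\ker T_h = Z_h$ via the ellipticity of $a_h$, the $a_h$-orthogonal $T_h$-invariant decomposition $V_h = Z_h \oplus Z_h^{\perp_{a_h}}$, and the observation that self-adjointness guarantees diagonalizability so that geometric and algebraic multiplicities coincide (in particular for the eigenvalue $0$). No gaps.
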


\setcounter{equation}{0}
\section{Convergence and error estimates.}
\label{SEC:approximation}

In this section we will establish convergence
and error estimates of the proposed VEM discretization.
With this aim, we will prove that $T_h$ provides
a correct spectral approximation of $T$ using the
classical theory for compact operators (see \cite{BO}).

We start with the following approximation result,
on star-shaped polygons,
which is derived by interpolation
between Sobolev spaces (see for instance \cite[Theorem I.1.4]{GR}
from the analogous result for integer values of $s$).
We mention that this result has been stated
in \cite[Proposition 4.2]{BBCMMR2013} for integer values
and follows from the classical Scott-Dupont theory (see \cite{BS-2008}
and \cite[Proposition 3.1]{ABSV2016}):

\begin{proposition}
\label{app1}
There exists a constant $C>0$,
such that for every $v\in H^{\delta}(\E)$ there exists
$v_{\pi}\in\P_k(\E)$, $k\geq 0$ such that
\begin{equation*}
\vert v-v_{\pi}\vert_{\ell,\E}\leq C h_\E^{\delta-\ell}|v|_{\delta,\E}\quad 0\leq\delta\leq
k+1, \ell=0,\ldots,[\delta],
\end{equation*}
with $[\delta]$ denoting largest integer equal or smaller than $\delta \in {\mathbb R}$.
\end{proposition}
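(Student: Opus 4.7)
The plan is to establish this in two stages: first for integer values of $\delta$ via the Dupont--Scott averaged Taylor polynomial on star-shaped domains, and then extend to arbitrary real $\delta \in [0,k+1]$ by real interpolation between Sobolev spaces of consecutive integer order. Assumption \textbf{A1} (star-shapedness with respect to a ball of radius $C_{\CT} h_\E$) is exactly what makes the averaged Taylor construction applicable uniformly over the mesh.

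For integer $\delta$ with $0 \le \delta \le k+1$, I would define $v_\pi$ to be the averaged Taylor polynomial of degree at most $k$ with respect to the ball $B$ of radius $C_{\CT} h_\E$ from \textbf{A1}. The Bramble--Hilbert lemma on star-shaped domains (Dupont--Scott) then gives
\begin{equation*}
|v - v_\pi|_{\ell,\E} \le C\, h_\E^{\delta - \ell}\, |v|_{\delta,\E}, \qquad \ell = 0, \ldots, \delta,
\end{equation*}
with a constant $C$ that depends only on $k$, $\ell$, and the chunkiness parameter, hence only on $C_{\CT}$. The most delicate check here is the uniformity of $C$ over the family $\{\CT_h\}_h$; this is precisely what the star-shapedness assumption buys, since it controls the Jacobian bounds in the scaling argument from the reference configuration.

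For non-integer $\delta$, write $\delta = m + \theta$ with $m := [\delta]$ and $\theta \in (0,1)$. From the integer case applied at orders $m$ and $m+1$, we already have bounds
\begin{equation*}
|v - v_\pi|_{\ell,\E} \le C\, h_\E^{m-\ell}\, |v|_{m,\E} \qquad\text{and}\qquad |v - v_\pi|_{\ell,\E} \le C\, h_\E^{m+1-\ell}\, |v|_{m+1,\E}
\end{equation*}
for $\ell = 0,\ldots,m$. Interpreting the error map $v \mapsto v - v_\pi$ as a bounded linear operator from $H^m(\E)$ and $H^{m+1}(\E)$ into $H^\ell(\E)$, real interpolation (using that $H^{m+\theta}(\E) = [H^m(\E), H^{m+1}(\E)]_{\theta,2}$, cf.\ \cite[Theorem I.1.4]{GR}) yields the intermediate bound with exponent $\delta - \ell = (m-\ell) + \theta$, which is the desired estimate. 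Extending to $\ell = m$ is also covered since $[\delta] = m$.

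The main obstacle, as noted, is the scale invariance of the constant: when one passes to a reference element one must verify that the averaged Taylor polynomial and the associated Poincaré-type inequalities pick up only factors of $h_\E$ and constants depending on $C_{\CT}$. Once this is controlled elementwise under \textbf{A1}, the interpolation step is formally clean because the real interpolation functor preserves operator norms, so no extra $h$-dependence is introduced. This is precisely the argument sketched in \cite[Proposition 4.2]{BBCMMR2013} and \cite[Proposition 3.1]{ABSV2016}, which the authors cite as the template.
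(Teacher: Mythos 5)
Your proposal is correct and takes essentially the same route the paper relies on: the paper does not give its own proof of Proposition~\ref{app1}, but instead cites the Dupont--Scott/Bramble--Hilbert theory on star-shaped domains for integer exponents and interpolation between Sobolev spaces (via \cite[Theorem~I.1.4]{GR}, \cite[Proposition~4.2]{BBCMMR2013}, \cite[Proposition~3.1]{ABSV2016}) for the fractional case, which is exactly the two-stage argument you outline.
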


In what follows, we write several auxiliary results which will be useful in the
forthcoming analysis. First, we write standard error estimations
for the projector $\boldsymbol{\Pi}_K^{k-1}$.

\begin{lemma}
\label{app3}
There exists $C>0$ independent of $h$ such that for all ${\bf v}\in H^{\delta}(K)^2$
\begin{equation*}
\Vert {\bf v}-\boldsymbol{\Pi}_K^{k-1}{\bf v}\Vert_{0,\E}\leq
C h_\E^{\delta}|{\bf v}|_{\delta,\E}\quad 0\leq \delta\leq
k+1.
\end{equation*}
\end{lemma}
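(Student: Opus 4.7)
The plan is to combine the best-approximation property of the $L^2$-orthogonal projection $\boldsymbol{\Pi}_K^{k-1}$ with the Bramble--Hilbert type polynomial approximation estimate already stated in Proposition~\ref{app1}, applied componentwise. No VEM-specific machinery is needed at this stage, so this is essentially a textbook Scott--Dupont argument; the only care needed is to use polynomial degree $k-1$ (rather than $k$) when invoking Proposition~\ref{app1}, since $\boldsymbol{\Pi}_K^{k-1}$ maps into $\mathbb{P}_{k-1}(K)^2$.

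First I would record the best-approximation inequality: for any ${\bf v}_\pi\in\mathbb{P}_{k-1}(K)^2$,
$$
\Vert {\bf v}-\boldsymbol{\Pi}_K^{k-1}{\bf v}\Vert_{0,K}
\le \Vert {\bf v}-{\bf v}_\pi\Vert_{0,K}.
$$
This follows directly from the definition of $\boldsymbol{\Pi}_K^{k-1}$, because ${\bf v}-\boldsymbol{\Pi}_K^{k-1}{\bf v}$ is $L^2(K)^2$-orthogonal to $\mathbb{P}_{k-1}(K)^2$, and hence Pythagoras' identity yields $\Vert {\bf v}-\boldsymbol{\Pi}_K^{k-1}{\bf v}\Vert_{0,K}^2 + \Vert \boldsymbol{\Pi}_K^{k-1}{\bf v}-{\bf v}_\pi\Vert_{0,K}^2 = \Vert {\bf v}-{\bf v}_\pi\Vert_{0,K}^2$.

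Next I would apply Proposition~\ref{app1} componentwise to each component $v_i$ of ${\bf v}\in H^\delta(K)^2$ with the polynomial degree chosen as $k-1$ and taking $\ell=0$, which, under the mesh regularity assumptions \textbf{A1}--\textbf{A2}, yields a ${\bf v}_\pi\in\mathbb{P}_{k-1}(K)^2$ with
$$
\Vert {\bf v}-{\bf v}_\pi\Vert_{0,K}\le C\,h_K^\delta\,|{\bf v}|_{\delta,K},
\qquad 0\le\delta\le k.
$$
Plugging this into the best-approximation inequality produces the claimed bound in the regime $\delta\in[0,k]$. The only subtle point is the stated upper endpoint $\delta=k+1$: since $\boldsymbol{\Pi}_K^{k-1}$ cannot reproduce $\mathbb{P}_k$, the best possible rate for a single $L^2$-projection onto $\mathbb{P}_{k-1}(K)^2$ is $h_K^k$, so the statement should be read as $0\le\delta\le k$ (which is the effective range used later in the error analysis); one recovers the full range only after applying the argument to a projector of one degree higher. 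I do not expect any real obstacle here, since every ingredient (orthogonality of the $L^2$ projection, Scott--Dupont polynomial approximation under shape regularity) is completely classical.
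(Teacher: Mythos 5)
Your proof is correct and is precisely the standard argument that Lemma~\ref{app3} implicitly relies on; the paper states this estimate without proof as a ``standard error estimation,'' and what you wrote (best-approximation via the $L^2$-orthogonality plus the Bramble--Hilbert / Scott--Dupont bound from Proposition~\ref{app1} applied componentwise with degree $k-1$) is exactly the textbook route. Your observation about the exponent range is also well taken: since $\boldsymbol{\Pi}_K^{k-1}$ projects onto $\mathbb{P}_{k-1}(K)^2$, the attainable range is $0\le\delta\le k$, not $k+1$; the upper bound $k+1$ in the paper's statement appears to be a slip (likely carried over from Proposition~\ref{app1}, which concerns $\mathbb{P}_k$), and indeed every later use of Lemma~\ref{app3} in Lemma~\ref{lemcotste} and Theorem~\ref{orderconv} invokes it only with $\delta\le k$, so the discrepancy is harmless for the paper's conclusions.
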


Now, we present an interpolation result in the virtual space
$V_h$ (see \cite{ABSV2016,BMR2016}).

\begin{proposition}\label{app2}
Assume {\textbf{A1}--\textbf{A2}} are satisfied, then
for all $v\in H^s(K)$ there exist $v_I\in V_h$ and $C>0$	independent of $h$ such that
\begin{equation*}
||v-v_I ||_{l,K}\leq C h_K^{s-l}|v|_{s,K},\quad l=0,1,2, \quad 2\leq s\leq k+1. 
\end{equation*}
\end{proposition}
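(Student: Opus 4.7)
The plan is to build the interpolant elementwise using the unisolvence of the degrees of freedom ${\bf D_1}$--${\bf D_5}$, glue the local pieces into a globally $H^2$-conforming function, and then obtain the error bound by combining polynomial approximation (Proposition~\ref{app1}) with a standard Bramble--Hilbert/scaling argument.

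First I would define, for each $K\in\mathcal{T}_h$, the local interpolant $v_I|_K\in V_h^K$ as the unique element sharing all output values of ${\bf D_1}$--${\bf D_5}$ with $v$; existence and uniqueness follow from the unisolvence established in \cite{ChM-camwa}. Vertex and vertex-gradient values are shared by adjacent elements, and so are the edge moments of $v$ and of $\partial_{\nu}v$; these data pin down exactly the trace polynomial of degree $r$ and the normal-derivative polynomial of degree $s=k-1$ on each edge, so adjacent local interpolants agree in both trace and normal derivative across every interelement edge, yielding $v_I\in V_h\subset \Hcd$.

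For the estimate, since $\P_k(\E)\subset V_h^K$, the interpolant of any polynomial is the polynomial itself. Choosing $v_\pi\in\P_k(\E)$ as in Proposition~\ref{app1} and writing
\[
v-v_I = (v-v_\pi) - (v-v_\pi)_I ,
\]
the triangle inequality reduces matters to bounding $\|(v-v_\pi)_I\|_{\ell,K}$ for $\ell=0,1,2$ in terms of Sobolev norms of $v-v_\pi$. This is a standard scaling argument: mapping $K$ to a reference polygon $\widehat K$ of unit diameter (possible uniformly in $h$ by {\bf A1}--{\bf A2}), one uses the finite-dimensional equivalence on $V_h^{\widehat K}$ between $\|\cdot\|_{\ell,\widehat K}$ and a seminorm built from the degrees of freedom, the continuity of each functional in ${\bf D_1}$--${\bf D_5}$ on $H^s(\widehat K)$, and then scales back to obtain the factor $h_K^{s-\ell}$; combining with Proposition~\ref{app1} gives the claim.

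The main obstacle is establishing the continuity of the degrees of freedom on $H^s(K)$ in the borderline case $s=2$: in two dimensions the pointwise gradient evaluations in ${\bf D_2}$ are not bounded on $H^1(K)$, so $v_I$ cannot be defined pointwise for a generic $H^2$ function. The standard remedy I would adopt is to precede the VEM interpolation by a Cl\'ement- or Scott--Zhang-type regularization delivering a $C^1$ proxy of $v$ with optimal approximation properties, and then apply the degree-of-freedom interpolation to this regularization; the composition preserves $\P_k(\E)$, inherits the scaling above, and produces the stated bound uniformly in $h$ thanks to {\bf A1}--{\bf A2}.
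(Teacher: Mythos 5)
The paper itself does not prove this proposition; it simply refers to \cite{ABSV2016,BMR2016}. Your outline follows the standard route used in those references: build the local DOF-interpolant from ${\bf D_1}$--${\bf D_5}$, verify that the shared edge data glue the local pieces into a global $C^1$ function, and then derive the estimate from polynomial approximation (Proposition~\ref{app1}) together with a Bramble--Hilbert/scaling argument that is uniform under {\bf A1}--{\bf A2}.

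You are right to single out the borderline case $s=2$: in two dimensions $H^2(K)$ does not embed into $C^1(\overline K)$, so the pointwise gradient evaluations in ${\bf D_2}$ are not bounded functionals on $H^2(K)$ and the naive DOF-interpolant is not defined for a generic $v\in H^2(K)$. Precomposing with a $C^1$ quasi-interpolation operator of Cl\'ement/Scott--Zhang type is indeed the standard remedy. Once you introduce such a regularization $S_h$, however, your middle-paragraph decomposition should be replaced by $v - v_I = (v - S_h v) + \bigl(S_h v - (S_h v)_I\bigr)$, with the first term handled by the approximation properties of $S_h$ and the second by the scaling/Bramble--Hilbert argument applied to the smooth function $S_h v$; there is no need to claim that the composition ``preserves $\P_k(\E)$,'' which is in general not exact for quasi-interpolants near $\partial\O$, and the Bramble--Hilbert step goes through anyway because the DOF-interpolant alone reproduces $\P_k(\E)$. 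It is also worth noting that in every application in this paper (Lemma~\ref{lemcotste}, Theorem~\ref{conv_norm_buckling}, Theorem~\ref{orderconv}) the interpolant is invoked only for functions in $H^{2+\tilde s}(\O)$ with $\tilde s>1/2$ by Lemma~\ref{LEM:REG_buck}, so the borderline case $s=2$ is stated for completeness but never actually used.
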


Now, in order to prove the convergence of our method,
we introduce the following broken $H^{s}$-seminorm ($s=1,2$):
$$|v|_{s,h}:=\Big(\sum_{\E\in\CT_h}|v|_{s,\E}^{2}\Big)^{1/2},$$
which is well defined for every $v\in L^{2}(\O)$ such that
$v|_{\E}\in H^{s}(\E)$ for all polygon $\E\in \CT_{h}$.

Now, with these definitions we have the following results.

\begin{lemma}
\label{lemcotste}
There exists $C>0$ such that, for all $f\in\Hcd$, if $w=Tf$ and
$w_h=T_h f$, then
$$
\left\|\left(T-T_h\right)f\right\|_{2,\O}
=\left\|w-w_h\right\|_{2,\O}
\le C\Big(h||f ||_{2,\O}
+\left\|w-w_{I}\right\|_{2,\O}
+\vert w-w_{\pi}\vert_{2,h}\Big),
$$
for all $w_I\in\Vh$ and for all $w_{\pi}\in\LO$ such that
$w_{\pi}|_{\E}\in\P_k(\E) \quad \forall \E\in\CT_h$.
\end{lemma}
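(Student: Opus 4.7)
I would follow a standard Strang-type argument adapted to the VEM setting. Writing $\delta_h:=w_I-w_h\in\Vh$, the triangle inequality $\|w-w_h\|_{2,\O}\le\|w-w_I\|_{2,\O}+\|\delta_h\|_{2,\O}$ reduces the estimate to bounding $\|\delta_h\|_{2,\O}$. Combining the discrete ellipticity from Lemma~\ref{ha-elipt-disc} with the discrete source problem~\eqref{defTh_b},
\[
\alpha\,\|\delta_h\|_{2,\O}^{2}\le a_h(\delta_h,\delta_h)=a_h(w_I,\delta_h)-b_h(f,\delta_h).
\]
Inserting a piecewise polynomial $w_\pi$ with $w_\pi|_\E\in\P_k(\E)$, exploiting the polynomial consistency~\eqref{consis-a} and the continuous source identity $a(w,\delta_h)=b(f,\delta_h)$ from~\eqref{T1}, I would rewrite
\[
a_h(\delta_h,\delta_h)=\underbrace{a_h(w_I-w_\pi,\delta_h)}_{\mathrm{(I)}}+\underbrace{\sum_{K\in\CT_h}a_\E(w_\pi-w,\delta_h)}_{\mathrm{(II)}}+\underbrace{b(f,\delta_h)-b_h(f,\delta_h)}_{\mathrm{(III)}}.
\]

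Terms (I) and (II) are routine. Cauchy--Schwarz in the broken $H^2$-seminorm gives $|\mathrm{(II)}|\le|w-w_\pi|_{2,h}\,|\delta_h|_{2,\O}$. Since $w_\pi|_\E\in\P_k(\E)\subseteq\VK$, the elementwise stability~\eqref{stab-a} applies to $(w_I-w_\pi)|_\E$, and combining it with the triangle inequality $|w_I-w_\pi|_{2,h}\le|w-w_I|_{2,\O}+|w-w_\pi|_{2,h}$ yields $|\mathrm{(I)}|\le C\bigl(\|w-w_I\|_{2,\O}+|w-w_\pi|_{2,h}\bigr)\|\delta_h\|_{2,\O}$.

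The main obstacle is the consistency error (III) coming from the reduced right-hand side $b_h$. I would decompose elementwise via
\[
\boldsymbol{\eta}\nabla f\cdot\nabla\delta_h-\boldsymbol{\eta}\boldsymbol{\Pi}_K^{k-1}\nabla f\cdot\boldsymbol{\Pi}_K^{k-1}\nabla\delta_h=\boldsymbol{\eta}(I-\boldsymbol{\Pi}_K^{k-1})\nabla f\cdot\nabla\delta_h+\boldsymbol{\eta}\boldsymbol{\Pi}_K^{k-1}\nabla f\cdot(I-\boldsymbol{\Pi}_K^{k-1})\nabla\delta_h,
\]
bound each summand in $L^2\times L^2$ using $\|\boldsymbol{\eta}\|_{L^\infty(\O)}$, and apply Lemma~\ref{app3} with $\delta=1$ to $\nabla f\in H^1(\E)^2$ and to $\nabla\delta_h\in H^1(\E)^2$ to gain one factor of $h_\E$ in each product; the $L^2$-stability of $\boldsymbol{\Pi}_K^{k-1}$ controls $\|\boldsymbol{\Pi}_K^{k-1}\nabla f\|_{0,\E}$ by $\|\nabla f\|_{0,\E}$. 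Summation and Cauchy--Schwarz then lead to $|\mathrm{(III)}|\le Ch\|f\|_{2,\O}\|\delta_h\|_{2,\O}$. Collecting the three bounds, dividing by $\|\delta_h\|_{2,\O}$, and invoking the initial triangle inequality yields the claim.
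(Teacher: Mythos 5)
Your proof is correct and follows essentially the same route as the paper: the same Strang-type decomposition of $a_h(\delta_h,\delta_h)$ into the stability error $a_h(w_I-w_\pi,\delta_h)$, the polynomial-consistency error $\sum_K a_K(w_\pi-w,\delta_h)$, and the right-hand-side consistency error $b(f,\delta_h)-b_h(f,\delta_h)$, which you then bound exactly as the paper does (the only cosmetic differences are the sign convention $\delta_h=w_I-w_h$ versus $v_h=w_h-w_I$ and the particular cross-term added and subtracted when splitting $b-b_h$, which changes nothing in the final estimate).
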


\begin{proof}
Let $f\in \Hcd$, and $w=Tf$ and $w_h=T_h f$.
For $w_I\in\Vh$, we set $v_h:=w_h-w_I$. Thus
\begin{equation}\label{convThplusminusu_I}
|| (T-T_h)f||_{2,\O}\leq || w-w_I||_{2,\O} +||v_h ||_{2,\O}.
\end{equation}
Now, thanks to Lemma~\ref{ha-elipt-disc}, the definition
of $a_{h,{\E}}(\cdot,\cdot)$ and those of $T$ and $T_h$, we have
\begin{align}
\alpha || v_h||_{2,\O}^2& \leq a_h(v_h,v_h)=a_h(w_h,v_h)-a_h(w_I,v_h)=a_h(w_h,v_h)-\sum\limits_{K\in \mathcal{T}_h}a_{h,K}(w_I,v_h)\nonumber\\
&  =a_h(w_h,v_h)-\sum\limits_{K\in \mathcal{T}_h} \Big\{ a_{h,K}(w_I-w_{\pi},v_h) +a_{h,K}(w_{\pi},v_h) \Big\}\nonumber\\ 
&  = a_h(w_h,v_h)-\sum\limits_{K\in \mathcal{T}_h} \Big\{ a_{h,K}(w_I-w_{\pi},v_h) +a_K(w_\pi-w,v_h)+a_{K}(w,v_h) \Big\}\nonumber\\
&  =a_h(w_h,v_h)-a(w,v_h)-\sum\limits_{K\in \mathcal{T}_h} \Big\{ a_{h,K}(w_I-w_{\pi},v_h) +a_K(w_\pi-w,v_h) \Big\}\label{eqconv2}.
\end{align}

We bound each term on the right hand side of \eqref{eqconv2}. The first term can be estimated as follows
\begin{align*}
& a_h(w_h,v_h)-a(w,v_h)= b_h(f,v_h)-b(f,v_h) \nonumber\\
&=\sumkth\Bigg\{\int_K\big\{ \boldsymbol{\eta}\boldsymbol{\Pi}_K^{k-1}\nabla f\cdot \boldsymbol{\Pi}_K^{k-1}\nabla v_h- \boldsymbol{\eta}\nabla f\cdot \nabla v_h\big\} \Bigg \}\nonumber\\
&=\sumkth\Bigg\{\int_K\big\{ \boldsymbol{\eta}\boldsymbol{\Pi}_K^{k-1}\nabla f\cdot \boldsymbol{\Pi}_K^{k-1}\nabla v_h - \boldsymbol{\eta}\nabla f\cdot \boldsymbol{\Pi}_K^{k-1}\nabla v_h + \boldsymbol{\eta}\nabla f\cdot \boldsymbol{\Pi}_K^{k-1}\nabla v_h - \boldsymbol{\eta}\nabla f\cdot \nabla v_h\big\} \Bigg \}\nonumber\\
&=\sumkth\Bigg\{\int_K\big\{ \boldsymbol{\eta}\left(\boldsymbol{\Pi}_K^{k-1}\nabla f-\nabla f\right)\cdot \boldsymbol{\Pi}_K^{k-1}\nabla v_h + \boldsymbol{\eta}\nabla f\cdot \left(\boldsymbol{\Pi}_K^{k-1}\nabla v_h -\nabla v_h\right)\big\} \Bigg \}\nonumber\\
&\le \sumkth C\Bigg\{ \Vert \boldsymbol{\Pi}_K^{k-1}\nabla f-\nabla f\Vert_{0,K}\Vert\boldsymbol{\Pi}_K^{k-1}\nabla v_h\Vert_{0,K}+\Vert\nabla f\Vert_{0,K}\Vert\boldsymbol{\Pi}_K^{k-1}\nabla v_h-\nabla v_h\Vert_{0,K}\Bigg \}\nonumber\\
&\le Ch\Vert f\Vert_{2,\Omega}\Vert v_h\Vert_{2,\Omega},
\end{align*}
where we have used Lemma~\ref{app3} in the last inequality.
Notice taht the constant
$C>0$ depends on $\Vert\boldsymbol{\eta}\Vert_{\infty}$.

Next, using the stability of $a_{h,K}(\cdot,\cdot)$,
the Cauchy-Schwarz and triangular inequalities in the
second term on the right hand side of \eqref{eqconv2},  we have

\begin{align*}
\alpha\left\|v_h\right\|^2_{2,\O}
&\le C\Big(h||f ||_{2,\O}
+\left\|w-w_{I}\right\|_{2,\O}
+\vert w-w_{\pi}\vert_{2,h}\Big) \Vert v_h\Vert_{2,\O}.
\end{align*}

Thus, the result follows from the previous bounds together with \eqref{eqconv2}.
\end{proof}

Now we are in a position to prove that the
operator $T_h$ converges in norm to $T$. 

\begin{theorem}\label{conv_norm_buckling}
For all $f\in \Hcd$, there exist $\tilde{s}\in(\frac{1}{2},1]$
and $C>0$ independent of $h$ such that 
$$||(T-T_h)f||_{2,\O}\le C h^{\tilde{s}}||f||_{2,\O}.$$
\end{theorem}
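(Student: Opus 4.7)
The plan is to combine the abstract error estimate from Lemma~\ref{lemcotste} with the additional regularity of the source problem solution and the standard approximation estimates for $V_h$ and for polynomials.

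First I would start from Lemma~\ref{lemcotste}, which gives, for $w:=Tf$ and any choice of $w_I\in V_h$ and piecewise polynomial $w_\pi$,
\begin{equation*}
\|(T-T_h)f\|_{2,\Omega}\le C\bigl(h\|f\|_{2,\Omega}+\|w-w_I\|_{2,\Omega}+|w-w_\pi|_{2,h}\bigr).
\end{equation*}
Since $f\in H^2_0(\Omega)\hookrightarrow H^1(\Omega)$, I would invoke the additional regularity in Lemma~\ref{LEM:REG_buck}(i) to obtain $w\in H^{2+\tilde s}(\Omega)$ with $\tilde s\in(1/2,1]$ and
\begin{equation*}
\|w\|_{2+\tilde s,\Omega}\le C\|f\|_{1,\Omega}\le C\|f\|_{2,\Omega}.
\end{equation*}

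With this regularity in hand, the two remaining terms are controlled by standard interpolation and polynomial approximation. Using Proposition~\ref{app2} elementwise with $s=2+\tilde s$ and $l=2$ (which is admissible since $2\le 2+\tilde s\le 3\le k+1$ for $k\ge2$), there exists $w_I\in V_h$ such that
\begin{equation*}
\|w-w_I\|_{2,K}\le C h_K^{\tilde s}|w|_{2+\tilde s,K},
\end{equation*}
and squaring, summing over $K\in\CT_h$ and using the regularity estimate gives $\|w-w_I\|_{2,\Omega}\le C h^{\tilde s}\|f\|_{2,\Omega}$. Analogously, Proposition~\ref{app1} with $\delta=2+\tilde s$ and $\ell=2$ produces a piecewise polynomial $w_\pi$ with $w_\pi|_K\in\P_k(K)$ such that $|w-w_\pi|_{2,K}\le C h_K^{\tilde s}|w|_{2+\tilde s,K}$, whence $|w-w_\pi|_{2,h}\le C h^{\tilde s}\|f\|_{2,\Omega}$.

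Plugging these two bounds into Lemma~\ref{lemcotste} and noting that $h\le h^{\tilde s}$ for $h\le 1$ and $\tilde s\le 1$, we conclude
\begin{equation*}
\|(T-T_h)f\|_{2,\Omega}\le C h^{\tilde s}\|f\|_{2,\Omega},
\end{equation*}
which is the claimed estimate. There is no real obstacle here: the proof is a direct assembly of three previously established ingredients, with the only subtle point being the verification that the regularity exponent $2+\tilde s$ falls in the admissible range of both approximation results (guaranteed by $k\ge 2$ and $\tilde s\le 1$), so that the $h^{\tilde s}$ rate emerges as the slowest of the three contributions in Lemma~\ref{lemcotste}.
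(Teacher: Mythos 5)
Your proof is correct and takes the same route as the paper: the paper's proof for this theorem consists precisely of invoking Lemma~\ref{lemcotste}, Lemma~\ref{LEM:REG_buck}, and Propositions~\ref{app1} and~\ref{app2}, which is exactly the assembly you carry out (and you correctly check that $2+\tilde s\le k+1$ is admissible since $k\ge 2$ and $\tilde s\le 1$). No gaps.
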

\begin{proof}
The proof is obtained from Lemma~\ref{lemcotste}
and Propositions~\ref{app1} and \ref{app2} and Lemma~\ref{LEM:REG_buck}.
\end{proof}

Next, we will use the classical theory for compact operators
(see \cite{BO} for instance) in order to prove convergence and error estimates
for eigenfunctions and eigenvalues. Indeed, an immediate consequence of
Theorem~\ref{conv_norm_buckling} is that isolated parts of $\sp(T)$ are approximated
by isolated parts of $\sp(T_h)$.
It means that if $\mu$ is a nonzero eigenvalue of $T$ with algebraic
multiplicity $m$, hence there exist $m$ eigenvalues 
$\mu_h^{(1)},\ldots,\mu_h^{(m)}$ of $T_h$
(repeated according to their respective multiplicities)
that will converge to $\mu$ as $h$ goes to zero. 

Now, let us denote by $\mathcal{E}$ and $\mathcal{E}_h$ the
eigenspace associated to the eigenvalue $\mu$ and the spanned
of the eigenspaces associated to $\mu_h^{(1)},...,\mu_h^{(m)}$, respectively.

We also recall the definition of the \textit{gap} $\hdel$ between two closed
subspaces $\mathcal{X}$ and $\mathcal{Y}$ of a Hilbert space $\mathcal{V}$:
$$
\hdel(\mathcal{X},\mathcal{Y})
:=\max\left\{\delta(\mathcal{X},\mathcal{Y}),\delta(\mathcal{Y},\mathcal{X})\right\},$$
where
$$
\delta(\mathcal{X},\mathcal{Y})
:=\sup_{\mathbf{x}\in\mathcal{X}:\ 
	\left\|x\right\|_{\mathcal{V}}=1}\delta(x,\mathcal{Y}),
\quad\text{with }\delta(x,\mathcal{Y}):=
\inf_{y\in\mathcal{Y}}\|x-y\|_{\mathcal{V}}.$$
We also define  $$\gamma_h:=\sup\limits_{f\in \mathcal{E}:||f ||_{2,\O}=1}
||(T-T_h)f ||_{2,\O}.$$

The following error estimates for the approximation
of eigenvalues and eigenfunctions hold true which
is obtained from Theorems~7.1 and~7.3 from \cite{BO}.

\begin{theorem}
\label{gap}
There exists a strictly positive constant $C$ such that
\begin{align}
\hdel(\mathcal{E},\mathcal{E}_h) 
& \leq C \gamma_h,\nonumber\\
\left|\mu-\mu_h^{(j)}\right|
& \le C \gamma_h \qquad \forall j=1,\ldots,m.\nonumber
\end{align}
\end{theorem}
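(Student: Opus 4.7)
The plan is to regard Theorem~\ref{gap} as a direct application of the Babu\v ska--Osborn abstract spectral approximation theory (Theorems~7.1 and~7.3 of \cite{BO}) to the pair $(T,T_h)$. All the hypotheses required by that abstract framework have essentially been verified in the preceding sections, so the proof will amount to checking compatibility and invoking these two theorems.

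First, I would collect the structural ingredients: (i) by Lemma~\ref{CHAR_SP_buck} the continuous operator $T:\Hcd\to\Hcd$ is compact and self-adjoint with respect to $a(\cdot,\cdot)$; (ii) by Lemma~\ref{ha-elipt-disc} combined with the Lax--Milgram argument applied to \eqref{defTh_b}, the discrete operator $T_h$ is well defined, bounded, and self-adjoint with respect to $a_h(\cdot,\cdot)$; (iii) by Theorem~\ref{conv_norm_buckling} we have $\|T-T_h\|_{\mathcal{L}(\Hcd)}\to 0$ as $h\to 0$. In particular, $\gamma_h\to0$ and $\gamma_h$ provides an upper bound for the restriction $\|(T-T_h)|_{\CE}\|_{\mathcal{L}(\CE,\Hcd)}$, which is precisely the quantity that enters the abstract estimates.

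The small compatibility point to address is that $T_h$ is self-adjoint with respect to $a_h(\cdot,\cdot)$ rather than with respect to the ambient inner product $a(\cdot,\cdot)$. This is harmless because \eqref{stab-a} together with Lemma~\ref{ha-elipt} and Lemma~\ref{ha-elipt-disc} gives the uniform equivalence of norms $\|\cdot\|_{a_h}\simeq\|\cdot\|_{a}\simeq\|\cdot\|_{2,\Omega}$ on $\Vh$, with constants independent of~$h$. Consequently the abstract theory applies verbatim, and the multiplicative constants produced in the final bounds remain independent of~$h$.

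With these ingredients in place, Theorem~7.1 of \cite{BO} furnishes the convergence of isolated parts of $\sp(T_h)$ to isolated parts of $\sp(T)$ and delivers the gap estimate $\hdel(\CE,\CE_h)\le C\gamma_h$, while Theorem~7.3 of \cite{BO}, applied to each of the $m$ discrete eigenvalues $\mu_h^{(j)}$ clustering around the isolated eigenvalue $\mu$, yields $|\mu-\mu_h^{(j)}|\le C\gamma_h$, with $C$ depending on the spectral separation of $\mu$ from the rest of $\sp(T)$ but not on~$h$. I do not anticipate any real obstacle here, as the entire abstract machinery has been deliberately set up in the preceding sections; the most delicate task, norm convergence of $T_h$ to $T$, was already dispatched in Theorem~\ref{conv_norm_buckling}.
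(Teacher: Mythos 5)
Your proposal is correct and takes essentially the same approach as the paper, which states Theorem~\ref{gap} as an immediate consequence of Theorems~7.1 and~7.3 of Babu\v{s}ka--Osborn once compactness of $T$, self-adjointness, and the norm convergence of Theorem~\ref{conv_norm_buckling} are in hand. Your remark about the uniform equivalence of $a_h(\cdot,\cdot)$, $a(\cdot,\cdot)$ and $\|\cdot\|_{2,\Omega}$ is a reasonable elaboration of why the abstract theory applies with $h$-independent constants, and it is consistent with the paper's (implicit) reasoning.
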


Moreover, employing the additional regularity of the eigenfunctions,
we immediately obtain the following bound.

\begin{theorem}\label{gapr}
There exist $s> 1/2$ and $C>0$ independent of $h$ such that
\begin{align}
&||(T-T_h)f ||_{2,\O}
\le C h^{\min\{s,k-1\} }|| f||_{2,\Omega}\qquad \forall
f\in \mathcal{E},\label{bou_gamma_h}
\end{align}
and as a consequence,
\begin{align}
&\gamma_h \leq C h^{\min\{ s,k-1\} }. \label{bound1r}
\end{align}
\end{theorem}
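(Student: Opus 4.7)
The plan is to combine the generic error estimate of Lemma~\ref{lemcotste} with the elliptic regularity pick-up for eigenfunctions from Lemma~\ref{LEM:REG_buck}(ii). The key observation is that for $f\in\mathcal{E}$, since $Tf=\mu f$ for some non-zero $\mu$, the solution $w:=Tf=\mu f$ inherits the regularity of $f$; by Lemma~\ref{LEM:REG_buck}(ii), both $f$ and $w$ lie in $H^{2+s}(\Omega)$ with $\|w\|_{2+s,\Omega}\le C\|f\|_{2,\Omega}$. This transfers the extra Sobolev regularity from $f$ to $w$, which is exactly what the interpolation and polynomial approximation estimates require.

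Plugging this into Lemma~\ref{lemcotste} with $w_I\in V_h$ the virtual interpolant of $w$ from Proposition~\ref{app2} and $w_\pi$ the piecewise $\mathbb{P}_k$-approximation from Proposition~\ref{app1}, both invoked with Sobolev exponent $\min\{2+s,k+1\}$, yields
\begin{equation*}
\|w-w_I\|_{2,\Omega}+|w-w_\pi|_{2,h}\le C h^{\min\{s,k-1\}}\|w\|_{2+s,\Omega}\le C h^{\min\{s,k-1\}}\|f\|_{2,\Omega}.
\end{equation*}

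The remaining term $h\|f\|_{2,\Omega}$ from Lemma~\ref{lemcotste} is already majorised by $h^{\min\{s,k-1\}}\|f\|_{2,\Omega}$ whenever $\min\{s,k-1\}\le 1$. In the higher-regularity regime $\min\{s,k-1\}>1$ (which arises only for $k\ge 3$ and $s>1$), this term must be sharpened by revisiting the bound on $|b_h(f,v_h)-b(f,v_h)|$ derived inside the proof of Lemma~\ref{lemcotste}. The refinements are (i) applying Lemma~\ref{app3} with the improved regularity $\nabla f\in H^{1+s}$, and (ii) exploiting the $L^2$-orthogonality of $\boldsymbol{\Pi}_K^{k-1}$ to rewrite the second summand as $\int_K(\boldsymbol{\eta}\nabla f-\mathbf{p}_K)\cdot(\boldsymbol{\Pi}_K^{k-1}\nabla v_h-\nabla v_h)$ for an arbitrary $\mathbf{p}_K\in\mathbb{P}_{k-1}(K)^2$, then choosing $\mathbf{p}_K$ as an optimal polynomial approximation of $\boldsymbol{\eta}\nabla f$. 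These steps together produce $|b_h(f,v_h)-b(f,v_h)|\le C h^{1+\min\{s,k-1\}}\|f\|_{2+s,\Omega}\|v_h\|_{2,\Omega}$, which combined with Lemma~\ref{LEM:REG_buck}(ii) delivers the desired majorisation.

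Summing the three contributions and dividing by $\|v_h\|_{2,\Omega}$ proves \eqref{bou_gamma_h}, and \eqref{bound1r} is then immediate from the definition of $\gamma_h$ upon taking the supremum over $f\in\mathcal{E}$ with $\|f\|_{2,\Omega}=1$. The main technical obstacle is the sharpening step in the higher-regularity regime, because it requires enough smoothness of $\boldsymbol{\eta}$ for $\boldsymbol{\eta}\nabla f$ to inherit the $H^{1+s}$-regularity of $\nabla f$; this is precisely the additional regularity assumption on $\boldsymbol{\eta}$ announced in Section~\ref{SEC:SpPrCont}.
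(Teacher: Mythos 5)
Your argument follows the paper's strategy --- Lemma~\ref{lemcotste} combined with the regularity of Lemma~\ref{LEM:REG_buck}(ii) and the approximation results of Propositions~\ref{app1} and~\ref{app2} --- but you correctly surface a point that the paper's terse ``repeat the same steps'' proof leaves implicit: the term $Ch\|f\|_{2,\Omega}$ in the statement of Lemma~\ref{lemcotste} caps the rate at first order, so when $\min\{s,k-1\}>1$ (i.e.\ $k\ge 3$ on a sufficiently regular domain) the bound on $|b_h(f,v_h)-b(f,v_h)|$ inside the proof of Lemma~\ref{lemcotste} must be re-derived using the extra regularity $f\in\mathcal{E}\subset H^{2+s}(\Omega)$, rather than just being read off the lemma's conclusion. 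Your two refinements --- applying Lemma~\ref{app3} with exponent $\min\{1+s,k\}$ to the term containing $\boldsymbol{\Pi}_K^{k-1}\nabla f-\nabla f$, and exploiting the $L^2$-orthogonality of $\boldsymbol{\Pi}_K^{k-1}$ to replace $\boldsymbol{\eta}\nabla f$ by $\boldsymbol{\eta}\nabla f-\mathbf{p}_K$ in the factor against $\boldsymbol{\Pi}_K^{k-1}\nabla v_h-\nabla v_h$ --- are exactly what is needed to gain the extra power of $h$, and both require some smoothness of $\boldsymbol{\eta}$; note only that the paper formally attaches that hypothesis to Theorem~\ref{orderconv} rather than to the present theorem, so it is worth stating explicitly that it is already used here. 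Otherwise your proof is correct and, if anything, more careful than what the paper writes: it is the same route, with the missing detail filled in rather than a genuinely different argument.
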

\begin{proof}
The inequality~\eqref{bou_gamma_h} can be obtained by repeating the same
steps like in the proof of the Theorem~\ref{conv_norm_buckling} and
Lemma~\ref{LEM:REG_buck}. Estimate \eqref{bound1r} follows from the
definition of $\gamma_h$ and \eqref{bou_gamma_h}. 
\end{proof}

\begin{remark}
The error estimate obtained for the eigenpair
$(\mu,u)$ of $T$ in Theorem~\ref{gap}
implies similar estimates for the eigenpair $(\lambda:=1/\mu,u)$ of
Problem~\ref{Prob1_cont_buckling} by means of the discrete eigenvalues
$\lambda_h^{(j)}=1/\mu_h^{(j)}, 1\leq j\leq m$.
\end{remark}

Now, in what follows we will prove a double order of convergence
for the eigenvalue approximation. To prove this, we are going to 
assume that $\boldsymbol{\eta}$ is a smooth enough tensor.

\begin{theorem}\label{orderconv}
There exists a positive constant independent of $h$ such that
\begin{align*}
|\lambda - \lambda_h^{(j)}|\leq Ch^{2\min\{s,k-1 \}}\qquad \forall j=1,\ldots,m.
\end{align*}
\end{theorem}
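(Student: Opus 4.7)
The plan is to invoke the standard self-adjoint eigenvalue identity, adapted to the nonconforming (VEM) setting, which splits $\lambda_h^{(j)} - \lambda$ into a quadratic eigenfunction error plus two consistency defects. Fix an exact eigenpair $(\lambda,u)$ with $b(u,u)=1$ and pick a discrete eigenpair $(\lambda_h^{(j)}, u_h)$ with $b_h(u_h,u_h)=1$ whose eigenfunction $u_h$ is chosen, via the gap estimate of Theorem~\ref{gap}, so that $\|u-u_h\|_{2,\Omega}\leq C\gamma_h\leq Ch^{\min\{s,k-1\}}$ by Theorem~\ref{gapr}. A direct algebraic manipulation using symmetry of all four forms together with $a(u,\cdot)=\lambda b(u,\cdot)$ and $a_h(u_h,\cdot)=\lambda_h^{(j)} b_h(u_h,\cdot)$ yields the identity
\begin{align*}
\lambda_h^{(j)} - \lambda = a(u - u_h, u - u_h) - \lambda\, b(u - u_h, u - u_h) + (a_h - a)(u_h, u_h) + \lambda\,(b - b_h)(u_h, u_h).
\end{align*}

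The first two terms are immediately $O(h^{2\min\{s,k-1\}})$ by continuity of $a$ and $b$ on $\Hcd$ and the eigenfunction estimate above. For the stiffness consistency, on each element $K$ I would subtract a polynomial best-approximant $w_\pi\in\mathbb{P}_k(K)$ of $u$; then the polynomial consistency~\eqref{consis-a} and stability~\eqref{stab-a} of $a_{h,K}$ yield $|(a_h-a)_K(u_h,u_h)|\leq C(\|u-u_h\|_{2,K}^2 + \|u-w_\pi\|_{2,K}^2)$, which sums to $O(h^{2\min\{s,k-1\}})$ via Lemma~\ref{LEM:REG_buck}(ii), Proposition~\ref{app1}, and the eigenfunction bound.

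The main obstacle is the mass consistency $(b-b_h)(u_h,u_h)$: a bare Cauchy--Schwarz bound only yields a single factor of $h^{\min\{s,k-1\}}$ since $u_h$ is merely a virtual function. To reach double order I would add and subtract $\nabla u$ element-wise; the cross and quadratic terms in $\nabla u_h-\nabla u$ are controlled at once by Cauchy--Schwarz, the continuity of $\boldsymbol{\Pi}_K^{k-1}$, and the eigenfunction estimate, giving $O(h^{2\min\{s,k-1\}})$. What remains is the ``pure $u$'' contribution, which I would rewrite as
\begin{align*}
\int_K\boldsymbol{\eta}\nabla u\cdot\nabla u - \int_K\boldsymbol{\eta}\boldsymbol{\Pi}_K^{k-1}\nabla u\cdot\boldsymbol{\Pi}_K^{k-1}\nabla u = 2\int_K\boldsymbol{\eta}\bigl(\nabla u - \boldsymbol{\Pi}_K^{k-1}\nabla u\bigr)\cdot\nabla u - \int_K\boldsymbol{\eta}\bigl|\nabla u - \boldsymbol{\Pi}_K^{k-1}\nabla u\bigr|^2.
\end{align*}
The second integral is bounded directly by Lemma~\ref{app3}. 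For the first, I would invoke $L^2$-orthogonality of $\boldsymbol{\Pi}_K^{k-1}$ against polynomials to subtract from $\boldsymbol{\eta}\nabla u$ any $\mathbf{q}\in\mathbb{P}_{k-1}(K)^2$; the smoothness hypothesis on $\boldsymbol{\eta}$ ensures that $\boldsymbol{\eta}\nabla u$ inherits the Sobolev regularity of $\nabla u$ given by Lemma~\ref{LEM:REG_buck}(ii), so choosing $\mathbf{q}$ as a polynomial best-approximant and applying Lemma~\ref{app3} to both factors produces a bound of order $h^{2(\min\{s,k-1\}+1)}$, which is more than sufficient. Assembling the four pieces of the identity yields the claimed $O(h^{2\min\{s,k-1\}})$ bound; the delicate step is precisely this last orthogonality argument, for which the smoothness of $\boldsymbol{\eta}$ is essential in upgrading the naive single-order estimate.
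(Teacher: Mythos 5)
Your proposal follows essentially the same route as the paper: the algebraic identity splitting $\lambda_h^{(j)}-\lambda$ into the quadratic eigenfunction error plus the two consistency defects, the polynomial-consistency/stability argument for $(a_h-a)(u_h,u_h)$, and, for $(b-b_h)(u_h,u_h)$, the decisive combination of adding/subtracting $\nabla u$ with the $L^2$-orthogonality of $\boldsymbol{\Pi}_K^{k-1}$, the extra $H^{2+s}$ regularity of $u$, and the smoothness of $\boldsymbol{\eta}$ to upgrade the naive single order to the double order. The only substantive presentational difference is your $b$- and $b_h$-normalization, whereas the paper keeps $\|u_h\|_{2,\Omega}=1$, retains $b_h(u_h,u_h)$ in the identity, and checks at the end that $|b_h(u_h,u_h)|$ is bounded below — which is cleaner since $b$ is indefinite and the gap estimate is stated in the $H^2$-norm.
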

\begin{proof}
Let $u_h\in \mathcal{E}_h$ be an eigenfunction corresponding
to one of the eigenvalues $\lambda_h^{(j)},\, j=1,\ldots,m,$
with $||u_h||_{2,\O}=1$.
From Theorem~\ref{gap}, we have that there exists
$u\in \mathcal{E}$ satisfying
\begin{align}
||u-u_h ||_{2,\O} \leq C\gamma_h.\label{cota_conv}
\end{align}

It is easy to see that from the symmetry of the bilinear forms
in the continuous and discrete
spectral problems (cf. Problem~\ref{Prob1_cont_buckling} and Problem~\ref{Prob1disc_cont_buckling}), we have 
\begin{align*}
a(u-u_{h},u-u_{h})&-\l b(u-u_{h},u-u_{h})
=a(u_{h},u_{h})-\l b(u_{h},u_{h})\\
& =a(u_{h},u_{h})-a_{h}(u_{h},u_{h})+\l_{h}^{(j)}b_{h}(u_{h},u_{h})-\l b(u_{h},u_{h})\\
& =a(u_{h},u_{h})-a_{h}(u_{h},u_{h})+(\l_{h}^{(j)}-\l)b_{h}(u_{h},u_{h})
+\l[b_h(u_{h},u_{h})-b(u_{h},u_{h})],
\end{align*}
and therefore we have the following identity
\begin{align}
(\l_{h}^{(i)}-\l )b_h(u_{h},u_{h})&=a(u-u_{h},u-u_{h})-\l b(u-u_{h},u-u_{h})\nonumber\\
&\quad+(a_{h}(u_{h},u_{h})-a(u_{h},u_{h}))+\l\left[b(u_{h},u_{h})-b_{h}(u_{h},u_{h})\right].\label{ide_tru_Padr}
\end{align}
Now, we will bound each term on the right hand
side of \eqref{ide_tru_Padr}. For the first and second term we deduce
\begin{align*}
a(u-u_{h},u-u_{h})=|u-u_h|_{2,\O}^2\le C\gamma_h^2,
\end{align*}
and
\begin{align*}
b(u-u_{h},u-u_{h})&=\int_{\Omega}\boldsymbol{\eta}\boldsymbol{\Pi}_K^{k-1}
\nabla(u-u_h)\cdot \boldsymbol{\Pi}_K^{k-1}\nabla(u-u_h)\leq
\Vert\boldsymbol{\eta}\Vert_{\infty}||u-u_h ||_{2,\Omega}^2\leq
C\gamma_h^2. 
\end{align*}
Thus, we obtain 
\begin{align}
|a(u-u_{h},u-u_{h})-\l b(u-u_{h},u-u_{h})|\leq C \gamma_h^2.\label{cot1_tru_Pa}
\end{align}
Next, to bound the third term, we consider $u_{\pi}\in L^2(\Omega)$
such that $u_{\pi}|_{K}\in \mathbb{P}_k(K) $
for all $K\in \CT_h$ and the Proposition~\ref{app1} holds true.
Hence, using the properties \eqref{consis-a} and \eqref{stab-a}
of $a_{h,K}(\cdot,\cdot )$, we have
\begin{align*}
|a_h(u_h,u_h)-a(u_h,u_h)|
&=\Big|\sum\limits_{K\in \mathcal{T}_h}\Big\{a_{h,K}(u_h-u_{\pi},u_h) -a_{K}(u_h-u_{\pi},u_h)\Big\}\Big|\nonumber\\
&\leq  \sum\limits_{K\in \mathcal{T}_h} (1+\alpha_2)a_K(u_h-u_{\pi},u_h-u_{\pi})\nonumber\\
& \leq C \sum\limits_{K\in \mathcal{T}_h}|u_h-u_{\pi}|_{2,K}^2. 
\end{align*}
Then, adding and subtracting $u$, using the triangular inequality,
Proposition~\ref{app1}
and \eqref{cota_conv}, we get 
\begin{equation}
\label{cota_2_Tru_Pa}
\left|a_h(w_h,w_h)-a(w_h,w_h)\right|
\le C\big\{ \gamma_h^2 + h^{2\min\{s,k-1 \}} \big\}.
\end{equation}
On the other hand, the fourth term can be treated as follows:
\begin{align}\label{eqregth2}
b(u_h,u_h)- b_h(u_h,u_h)
&=\sumkth \Bigg\{\int_{K}\boldsymbol{\eta}\nabla u_h\cdot \nabla u_h
-\int_{K}\boldsymbol{\eta} \boldsymbol{\Pi}_K^{k-1}\nabla u_h \cdot \boldsymbol{\Pi}_K^{k-1}\nabla u_h \Bigg\}.\nonumber\\
&=\sumkth \Bigg\{\underbrace{\int_{K}\boldsymbol{\eta}\nabla u_h\cdot (\nabla u_h-\boldsymbol{\Pi}_K^{k-1}\nabla u_h)}_{E_1}+\underbrace{\int_{K}(\nabla u_h-\boldsymbol{\Pi}_K^{k-1}\nabla u_h) \cdot \boldsymbol{\eta} \boldsymbol{\Pi}_K^{k-1}\nabla u_h}_{E_2} \Bigg\}.\nonumber
\end{align}
Now, we bound the terms $E_1$ and $E_2$. We start with $E_1$:
\begin{align*}
E_1&=\int_{K}(\boldsymbol{\eta}\nabla u_h-\boldsymbol{\Pi}_K^{k-1}(\boldsymbol{\eta}\nabla u))\cdot (\nabla u_h-\boldsymbol{\Pi}_K^{k-1}\nabla u_h)\\
&=\int_{K}\Big(\boldsymbol{\eta}\nabla u_h-\boldsymbol{\eta}\nabla u
+\boldsymbol{\eta}\nabla u-\boldsymbol{\Pi}_K^{k-1}(\boldsymbol{\eta}\nabla u)\Big)\cdot \Big(\nabla u_h-\nabla u+\nabla u -\boldsymbol{\Pi}_K^{k-1}\nabla u+\boldsymbol{\Pi}_K^{k-1}(\nabla u-\nabla u_h)\Big)\\
&\le Ch^{2\min\{s,k-1 \}},
\end{align*}
where in the last inequality we have used the triangular inequality,
the approximation properties for $\boldsymbol{\Pi}_K^{k-1}$ (cf. Lemma~\ref{app3}),
the additional regularity for the stress tensor $\boldsymbol{\eta}$ and
the additional regularity for the eigenfunctions
and finally \eqref{cota_conv} together with \eqref{bound1r}.

For the term $E_2$, we repeat the same arguments used to bound $E_1$,
we obtain that
\begin{equation}\label{eqregth2}
E_2\le Ch^{2\min\{s,k-1 \}}.
\end{equation}

On the other hand, from Problem~\ref{Prob1disc_cont_buckling},
Lemma~\ref{ha-elipt-disc} and the fact $\l_h^{(j)}\to \l$
when $h\to 0$, we have
\begin{equation*}
|b_h(u_h,u_h)|=|\frac{1}{\lambda_h^{(j)}}a_h(u_h,u_h)|\geq \frac{\alpha}{|\lambda_h^{(j)}|}||u_h ||_{2,\Omega}^2=\frac{\alpha}{|\lambda_h^{(j)}|}=C>0\label{denominador_Tru_Pa}
\end{equation*}

Thus, the proof follows from the above bound
together with estimates \eqref{ide_tru_Padr}-\eqref{eqregth2}.
\end{proof}

\section{Numerical results.}\label{SEC:Num-Res}
In this section, we report some numerical
experiments to approximate the buckling coefficients
considering different configurations of the problem,
in order to confirm the theoretical
results presented in this work for the cases $k=2$ and $k=3$.
With this purpose, we have implemented in a MATLAB code
the proposed discretization, following the arguments presented
in \cite{BBMR2014}.

To complete the construction of the discrete bilinear
form, we have taken the symmetric form $s_{K}^{D}(\cdot,\cdot)$
as the euclidean scalar product associated to the degrees of freedom,
properly scaled to satisfy \eqref{term-stab-SK} (see \cite{ABSV2016,ChM-camwa,MRV}
for further details).

On the other hand, we have tested the method by using
different families of meshes
(see Figure~\ref{FIG:Meshes_NEedges}):
\begin{itemize} 
\item $\CT_h^1$: trapezoidal meshes which consist
of partitions of the domain into $N\times N$ congruent
trapezoids, all similar to the trapezoid with
vertices $(0,0)$, $(1/2,0)$, $(1/2,2/3)$ and $(0,1/3)$;
\item $\CT_h^2$: hexagonal meshes;
\item $\CT_h^3$:  triangular meshes;
\item $\CT_h^4$:  distorted concave rhombic quadrilaterals.
\end{itemize}

We have used successive refinements of an initial
mesh (see Figure~\ref{FIG:Meshes_NEedges}).
The refinement parameter $N$ used to label
each mesh is the number of elements
on each edge of the plate.

\begin{figure}[!t]
\begin{center}
\begin{minipage}{6cm}
\centering\includegraphics[height=6cm, width=6cm]{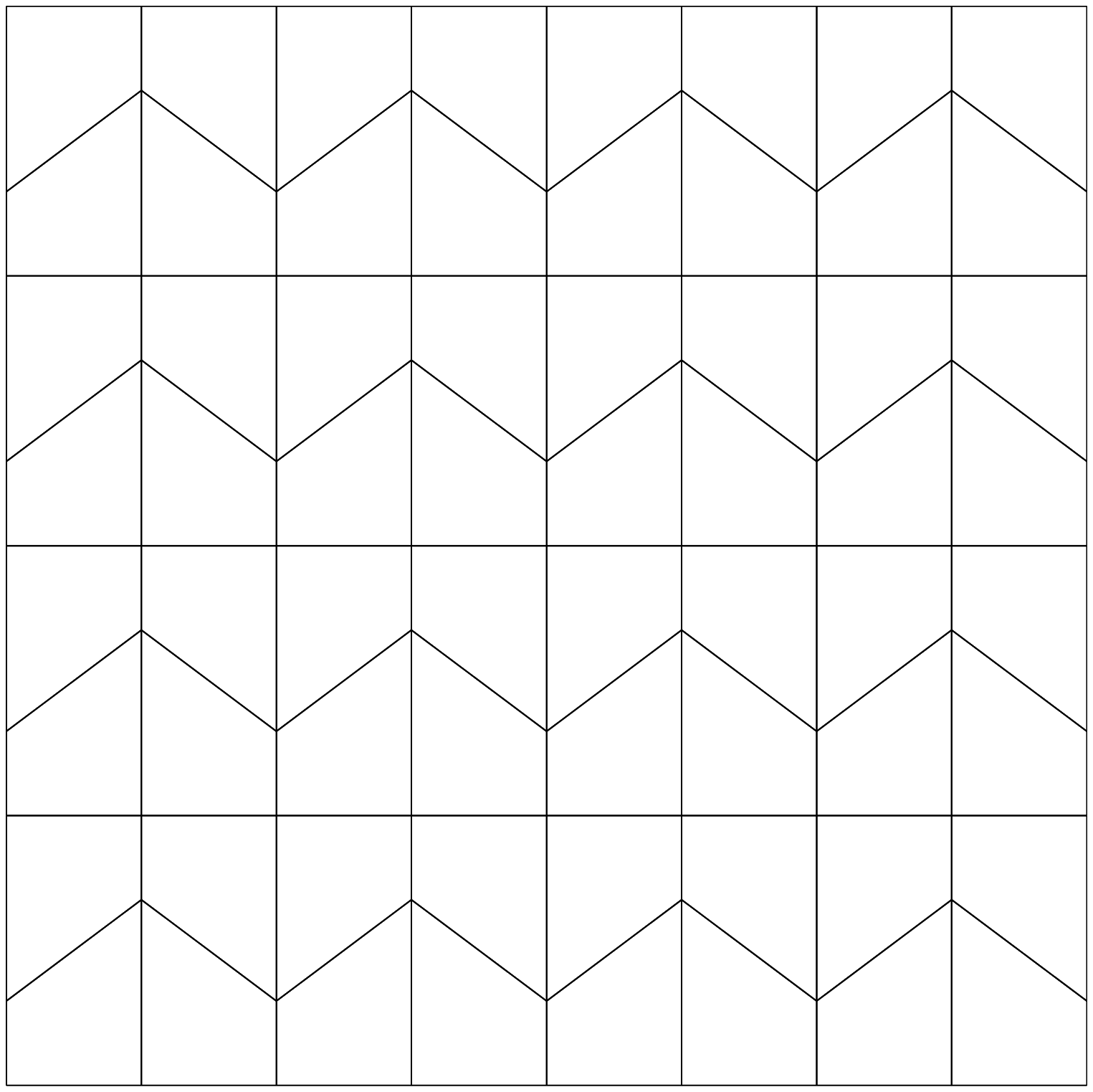}
\end{minipage}
\begin{minipage}{6cm}
\centering\includegraphics[height=6cm, width=6cm]{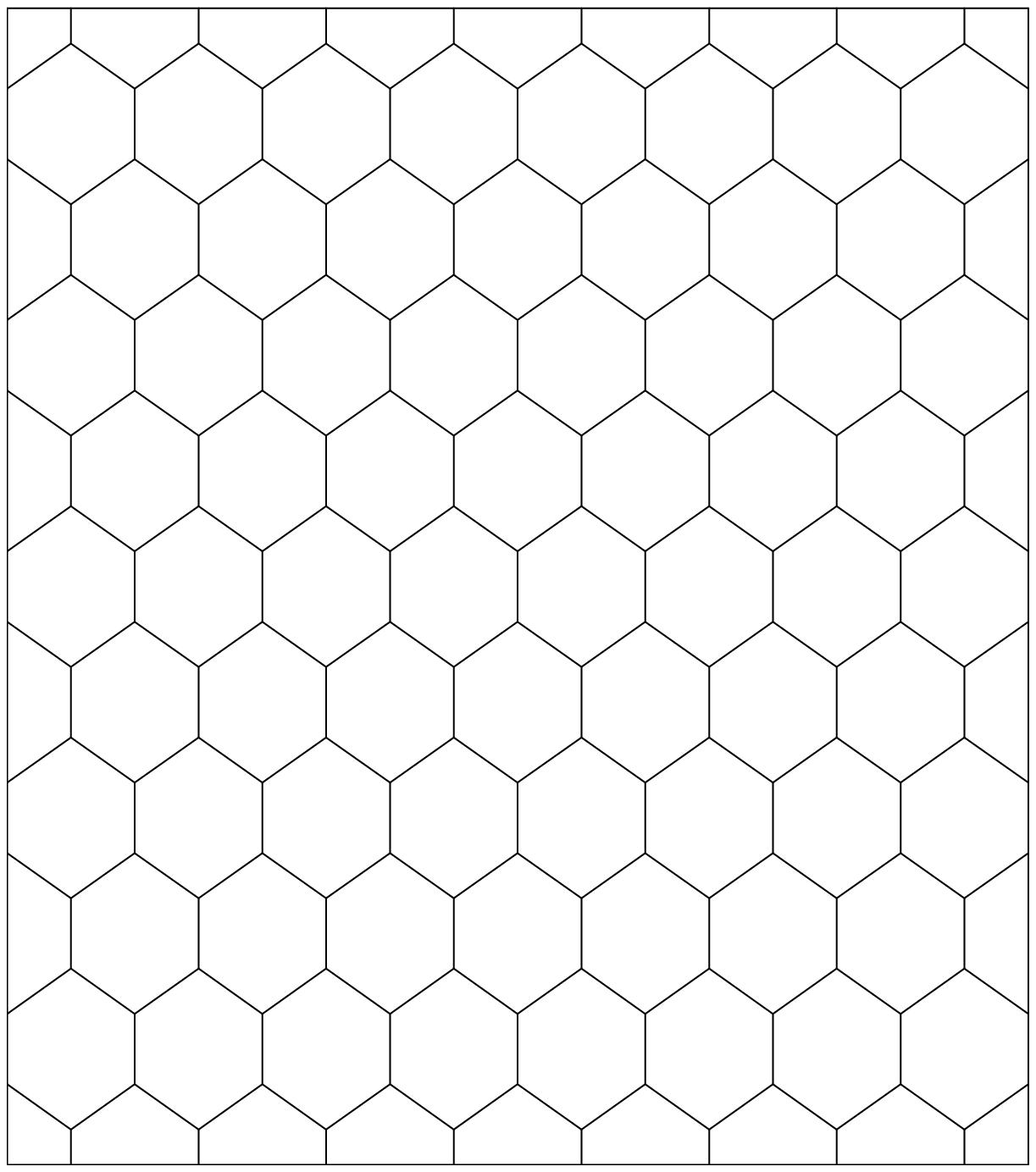}
\end{minipage}
\begin{minipage}{6cm}
\centering\includegraphics[height=6cm, width=6cm]{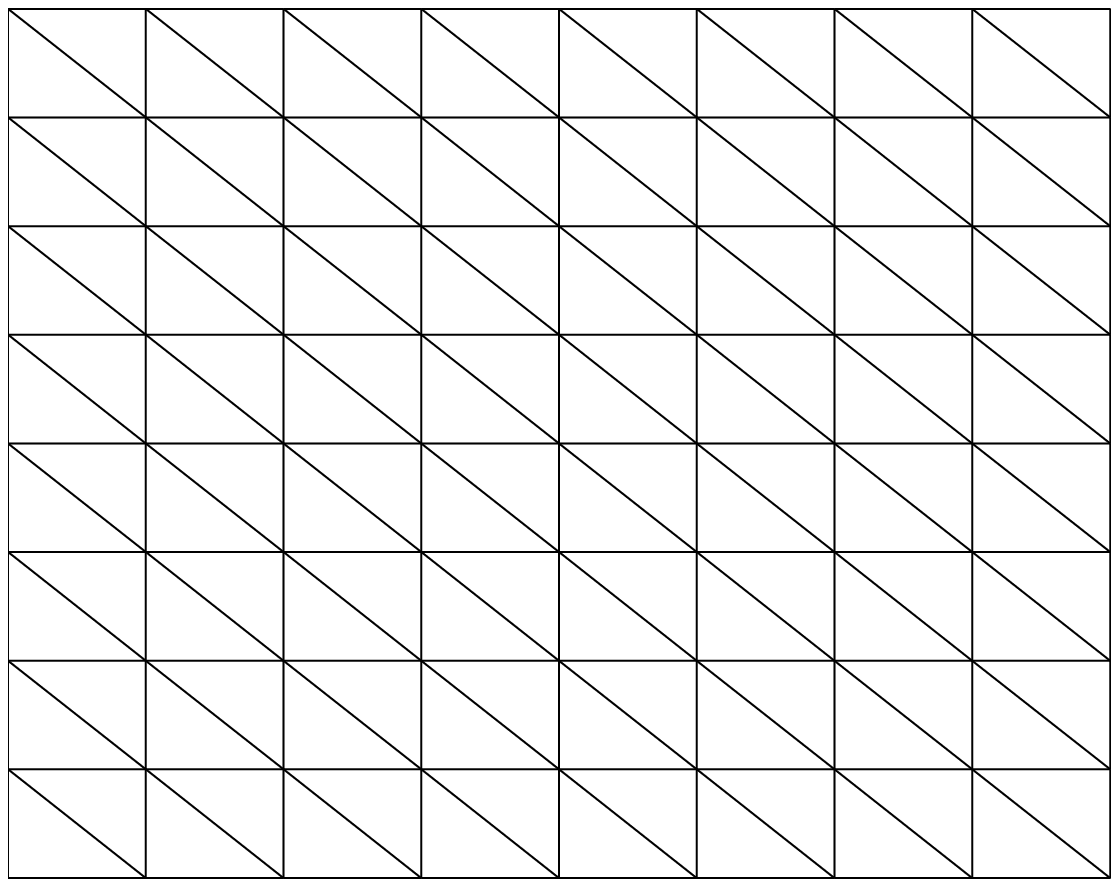}
\end{minipage}
\begin{minipage}{6cm}
\centering\includegraphics[height=6cm, width=6cm]{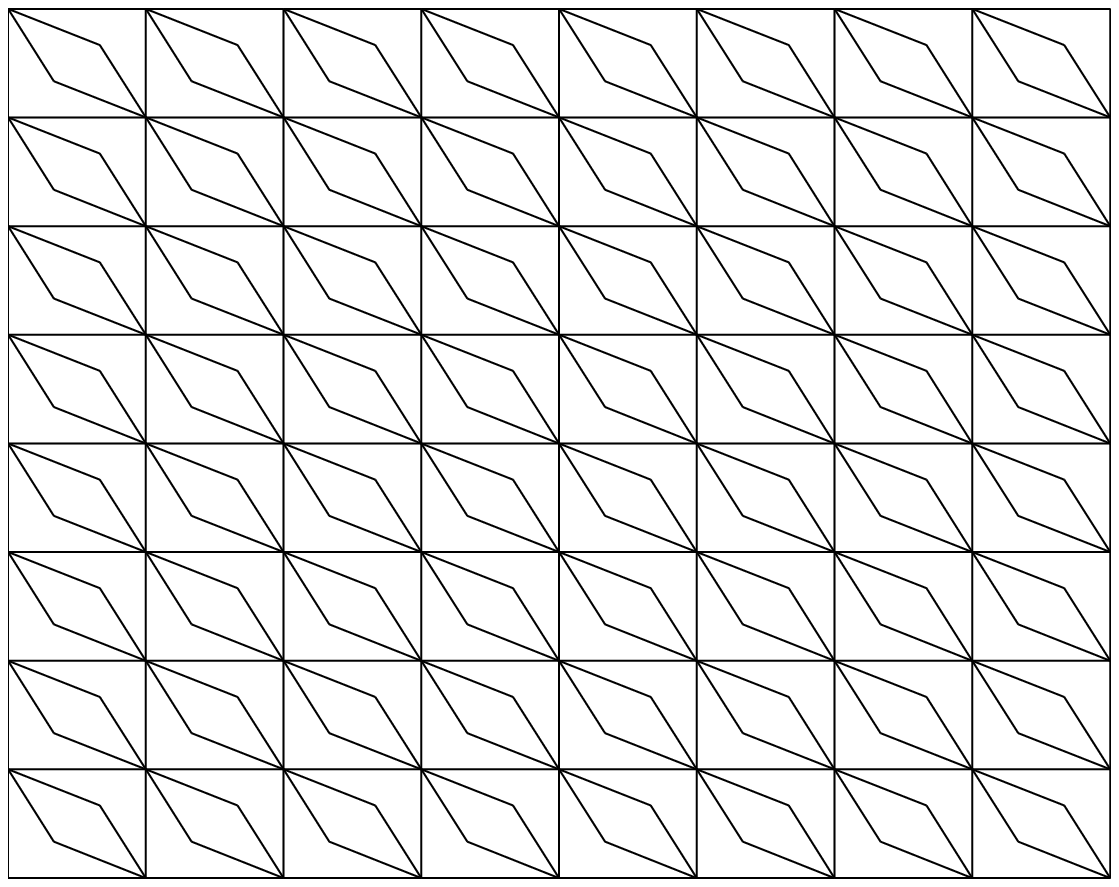}
\end{minipage}
\caption{ Sample meshes: $\CT_h^1$ (top left), $\CT_h^2$ (top right),
$\CT_h^3$ (bottom left) and $\CT_h^{4}$ (bottom right), for $N=8$.}
\label{FIG:Meshes_NEedges}
\end{center}
\end{figure}

We have chosen two configurations for the computational domain $\Omega$:
$\Omega_S:=(0,1)\times(0,1)$ and $\Omega_L:=(0,1)\times(0,1)\backslash [1/2,1)\times [1/2,1)$.
Even though our theoretical analysis has been developed only for clamped plates,
we will consider in Section~\ref{Sec:SF_buckling} other boundary conditions.

In order to compare our results for the buckling problem,
we introduce a non-dimensional buckling coefficient, which is
defined as:
\begin{equation}\label{scaled}
\widehat{\lambda}_{h}^{(j)}:=\frac{\lambda_{h}^{(j)} L}{\pi^2},
\end{equation}
where $L$ is the plate side length.

Moreover, we will consider different in-plane compressive
stress $\boldsymbol{\eta}$.
More precisely, we will compute the non-dimensional
buckling coefficients using the following $\boldsymbol{\eta}$: 
\begin{equation*}
\boldsymbol{\eta}_1:=\begin{pmatrix}1&0\\0&1\end{pmatrix}, \qquad
\boldsymbol{\eta}_2:=\begin{pmatrix}1&0\\0&0\end{pmatrix}, \qquad
\boldsymbol{\eta}_3:=\begin{pmatrix}0&1\\1&0\end{pmatrix}.
\end{equation*}

The physical meaning of the tensors $ \boldsymbol{\eta}_1, \boldsymbol{\eta}_2$
and $\boldsymbol{\eta}_3$ is illustrated in
Figures~\ref{eta_meaning} and \ref{eta_meaning2}, respectively.
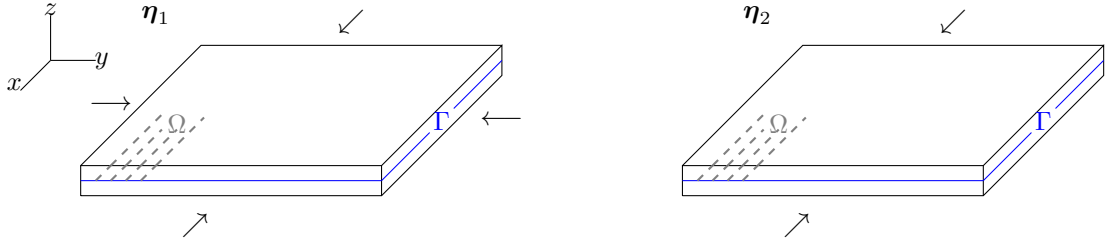
\begin{figure}
\begin{center}
\begin{tikzpicture}[scale=0.4]
\draw (0,0)--(10,0)--(14,4)--(14,5)--(10,1)--(0,1)--(0,0);
\draw (0,1)--(4,5)--(14,5);
\draw (10,0)--(10,1);
\draw[blue] (0,0.5)--(10,0.5);
\draw[blue] (10,0.5)--(11.6,2.1);
\draw[blue] (12.4,2.9)--(14,4.5);
\node at (12,2.5){${\textcolor{blue}{\Gamma}}$};
\draw[gray,thick,dashed] (0.5,0.5)--(1,1); 
\draw[gray,thick,dashed] (1,0.5)--(1.5,1);
\draw[gray,thick,dashed] (1.5,0.5)--(2,1); 
\draw[gray,thick,dashed] (2,0.5)--(2.5,1); 
\draw[gray,thick,dashed] (1,1)--(2.8,2.8);
\draw[gray,thick,dashed] (1.5,1)--(2.7,2.2);
\draw[gray,thick,dashed] (2,1)--(3.2,2.2);
\draw[gray,thick,dashed] (2.5,1)--(4.1,2.6);
\node at (3.2,2.4){$\textcolor{gray}{\Omega}$};
\node at (2.5,6) {$\boldsymbol{\eta}_1$}; 
\node at (3.8,-1) {$\nearrow$};
%
\node at (14,2.5){$\longleftarrow$};
\node at (9,5.8){$\swarrow$};
\node at (1,3) {$\longrightarrow$};
\draw (-1,4.5)--(-1,6); \node at (-1,6.2) {$z$};
\draw (-1,4.5)--(0.5,4.5); \node at (0.7,4.5) {$y$};
\draw (-1,4.5)--(-2,3.5); \node at (-2.2,3.7) {$x$};
\draw (20,0)--(30,0)--(34,4)--(34,5)--(30,1)--(20,1)--(20,0);
\draw (20,1)--(24,5)--(34,5);
\draw (30,0)--(30,1);
\draw[blue] (20,0.5)--(30,0.5);
\draw[blue] (30,0.5)--(31.6,2.1);
\draw[blue] (32.4,2.9)--(34,4.5);
\node at (32,2.5){${\textcolor{blue}{\Gamma}}$};
\draw[gray,thick,dashed] (20.5,0.5)--(21,1); 
\draw[gray,thick,dashed] (21,0.5)--(21.5,1);
\draw[gray,thick,dashed] (21.5,0.5)--(22,1); 
\draw[gray,thick,dashed] (22,0.5)--(22.5,1); 
\draw[gray,thick,dashed] (21,1)--(22.8,2.8);
\draw[gray,thick,dashed] (21.5,1)--(22.7,2.2);
\draw[gray,thick,dashed] (22,1)--(23.2,2.2);
\draw[gray,thick,dashed] (22.5,1)--(24.1,2.6);
\node at (23.2,2.4){$\textcolor{gray}{\Omega}$};
\node at (22.5,6) {$\boldsymbol{\eta}_2$}; 
\node at (23.8,-1) {$\nearrow$};
\node at (29,5.8){$\swarrow$};
\end{tikzpicture}
\caption{$\boldsymbol{\eta}_1$ (left) correspond to a uniformly compressed plate
(in the $x$, $y$ directions) and
$\boldsymbol{\eta}_2$ (right) correspond
to a plate subjected to uniaxial compression (in the $x$ direction).}\label{eta_meaning}
\end{center} 
\end{figure}
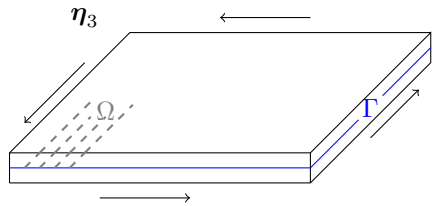
\begin{figure}
\begin{center}
\begin{tikzpicture}[scale=0.4]
\draw (0,0)--(10,0)--(14,4)--(14,5)--(10,1)--(0,1)--(0,0);
\draw (0,1)--(4,5)--(14,5);
\draw (10,0)--(10,1);
\draw[blue] (0,0.5)--(10,0.5);
\draw[blue] (10,0.5)--(11.6,2.1);
\draw[blue] (12.4,2.9)--(14,4.5);
\node at (12,2.5){${\textcolor{blue}{\Gamma}}$};
\draw[gray,thick,dashed] (0.5,0.5)--(1,1); 
\draw[gray,thick,dashed] (1,0.5)--(1.5,1);
\draw[gray,thick,dashed] (1.5,0.5)--(2,1); 
\draw[gray,thick,dashed] (2,0.5)--(2.5,1); 
\draw[gray,thick,dashed] (1,1)--(2.8,2.8);
\draw[gray,thick,dashed] (1.5,1)--(2.7,2.2);
\draw[gray,thick,dashed] (2,1)--(3.2,2.2);
\draw[gray,thick,dashed] (2.5,1)--(4.1,2.6);
\node at (3.2,2.4){$\textcolor{gray}{\Omega}$};
\draw[->] (3.0,-0.5) -- (6.0,-0.5);
\draw[->] (12,1.5)--(13.6,3.1);
\draw[<-] (7,5.5)--(10,5.5);
\draw[<-] (0.5,2)--(2.5,4);
\node at (2.5,5.5) {$\boldsymbol{\eta}_3$}; 
\end{tikzpicture}
\caption{$\boldsymbol{\eta}_3$ 
correspond to a plate subjected to shear load.}\label{eta_meaning2}
\end{center} 
\end{figure}

\subsection{Clamped square plate.}
\label{Sec:CC_buckling}

In this numerical test we compute the non-dimensional buckling coefficients (cf. \eqref{scaled})
for a uniformly compressed square plate $\Omega_S$.
This corresponds to the stress field $\boldsymbol{\eta}_1$

We report in Table~\ref{NVB_I_CC_O_sq_k=2}
the four lowest non-dimensional buckling coefficients computed
with the virtual element method analyzed in this paper.
The polynomial degrees are given by $k=2,3$ and
with two different families of meshes and $N = 32,64,128$.
The table includes orders of convergence as well as accurate values
extrapolated by means of a least-squares fitting. In the last row of the table,
we show the values obtained by extrapolating those computed with
different method presented in \cite{MoRo2009}.

\begin{table}[ht]
\caption{Lowest non-dimensional buckling coefficients $\widehat{\lambda}_{h}^{i}$, $i=1,2,3,4$
of a clamped square plate subjected to a plane
stress field  $\boldsymbol{\eta}_1$.}
\label{NVB_I_CC_O_sq_k=2}
\begin{center}
{\small\begin{tabular}{lllcccc}
\hline
\hline 
Mesh & $k$ & $N$ &$\widehat{\lambda}_{h}^{1}$ & $\widehat{\lambda}_{h}^{2}$
& $\widehat{\lambda}_{h}^{3}$ & $\widehat{\lambda}_{h}^{4}$ \\	
\hline
\hline
&& $32$ &5.2724&    9.1716&    9.2744&   12.8252\\
&& $64$ &5.2952&    9.2906&    9.3174&   12.9461\\
$\CT_h^2$&2& $128$&5.3014&    9.3229&    9.3297&   12.9786\\
&& Order    &1.86  &    1.88  &    1.80  &    1.89  \\
&& Extrap.  &5.3038&    9.3350&    9.3347&   12.9907\\
\hline
&& $32$ & 	 5.3037&    9.3345&    9.3347&   12.9918\\
&& $64$ &    5.3036&    9.3342&    9.3342&   12.9904\\
$\CT_h^2$&3& $128$&    5.3036&    9.3342&    9.3342&   12.9904\\
&& Order  &      3.95  &    3.95  &    3.94  &    3.93  \\
&& Extrap.&      5.3036&    9.3342&    9.3342&   12.9903\\
\hline
\hline 
		& & $32$  &5.3192&    9.3581&    9.3968&   13.0934\\
		& & $64$ &5.3075&    9.3401&    9.3498&   13.0162\\
$\CT_h^4$&$2$& $128$ &5.3046&    9.3356&    9.3381&   12.9968\\
		& & Order     &2.00  &    2.00  &    2.00  &    1.99  \\
		& & Extrap.   &5.3036&    9.3342&    9.3341&   12.9903\\
\hline
		& & $32$  &5.3039&    9.3348&    9.3353&   12.9939\\
		& & $64$ &5.3036&    9.3342&    9.3342&   12.9906\\
$\CT_h^4$&$3$& $128$ &5.3036&    9.3342&    9.3342&   12.9904\\
		& & Order	 &3.94  &    3.93  &    3.93  &    3.91  \\
		& & Extrap.	 &5.3036&    9.3342&    9.3342&   12.9903\\
\hline\rowcolor{Gray}
\cite{MoRo2009}&&& 5.3037& 9.3337 & 9.3337 & 12.9909\\\hline
\end{tabular}}
\end{center}
\end{table}

In this case, since $\Omega_S$ is convex, the problem have smooth eigenfunctions,
as a consequence, when using degree $k$, the order of convergence
is $2(k-1)$ as the theory predicts (cf. Theorem~\ref{orderconv}).
Moreover, the results obtained by the two methods agree perfectly well.

In the next test we compute once again the non-dimensional
buckling coefficients (in absolute value) of the same plate as in the previous example,
subjected to a uniform shear load. This corresponds to the stress field $\boldsymbol{\eta}_3$.

In Table~\ref{NVB_J_CC_O_sq_k=2} we report
the four lowest non-dimensional buckling coefficients (in absolute value) considering the
stress field $\boldsymbol{\eta}_3$.
Once again, the polynomial degrees are given by $k=2,3$ and
with two different families of meshes and $N=32,64,128$.
The table includes orders of convergence as well as accurate values
extrapolated by means of a least-squares fitting.
In the last row of the table, we show the values obtained
by extrapolating those computed with
different method presented in \cite{MoRo2009}.

Once again, it can be clearly observed from Table~\ref{NVB_J_CC_O_sq_k=2}
that our method computes the scaled buckling coefficients (cf.\eqref{scaled}) with an
optimal order of convergence and that the agreement
with the method from \cite{MoRo2009} is excellent.

\begin{table}[ht]
\caption{Lowest non-dimensional buckling coefficients (in absolute value)
$\widehat{\lambda}_{h}^{i}$, $i=1,2,3,4$
of a clamped square plate subjected to a plane stress tensor field  $\boldsymbol{\eta}_3$.}
\label{NVB_J_CC_O_sq_k=2}
\begin{center}
{\small\begin{tabular}{lllcccc}
\hline
\hline 
Mesh & $k$ & $N$  &$\widehat{\lambda}_{h}^{1}$ & $\widehat{\lambda}_{h}^{2}$
& $\widehat{\lambda}_{h}^{3}$ & $\widehat{\lambda}_{h}^{4}$ \\	
\hline
&& $32$ &14.6083&   16.8405&   33.2148&   35.2101\\
&& $64$ &14.6331&   16.8983&   33.3053&   35.2700\\
$\CT_h^1$&2& $128$&14.6397&   16.9137&   33.3319&   35.2888\\
&& Order    &1.89   &   1.92   &   1.77   &   1.67  \\
&& Extrap.  &14.6422&   16.9191&   33.3429&   35.2974\\
\hline 
&& $32$ &   14.6470&   16.9242&   33.3795&   35.3423\\
&& $64$ &   14.6423&   16.9192&   33.3437&   35.2986\\
$\CT_h^1$&3& $128$&   14.6420&   16.9189&   33.3413&   35.2957\\
&& Order  &      3.93  &    3.95  &    3.90  &    3.89  \\
&& Extrap.&     14.6420&   16.9188&   33.3411&   35.2954\\
\hline 
\hline
   &      & $36$ &14.6330&   16.9071&   33.2479&   35.1883\\
	&	 & $64$ &14.6398&   16.9159&   33.3178&   35.2685\\
$\CT_h^3$&2& $128$&14.6415&   16.9181&   33.3353&   35.2887\\
		& & Order    &2.02   &   2.01   &   1.99   &   1.99  \\
		& & Extrap.  &14.6420&   16.9188&   33.3412&   35.2955\\
\hline 
		 && $32$ &   14.6455&   16.9214&   33.3536&   35.3098\\
		 && $64$ &   14.6423&   16.9190&   33.3421&   35.2966\\
$\CT_h^3$&3 &$128$&   14.6420&   16.9189&   33.3412&   35.2955\\
		 & &Order  	&    3.63  &    3.71  &    3.65  &    3.62  \\
		 & & Extrap.	&   14.6420&   16.9188&   33.3411&   35.2954\\
\hline\rowcolor{Gray}
\cite{MoRo2009}&& &14.6420 & 16.9195 & 33.3376 & -\\\hline
\end{tabular}}
\end{center}
\end{table}

We show in Figure~\ref{FIG:NVB_J_CC_O_sq_k=2_Estabilizado}
the buckling mode associated with the lowest scaled buckling coefficient.

\begin{figure}[ht]
\begin{center}
\begin{minipage}{5cm}
\centering\includegraphics[height=5cm, width=6cm]{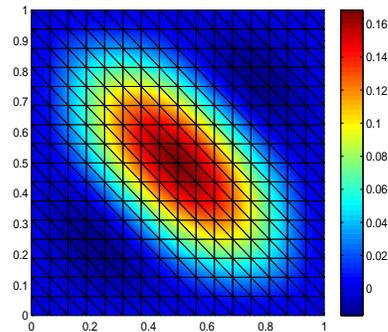}
\end{minipage}
\caption{Test 1. Buckling mode associated to the first non-dimensional buckling coefficient
of a square plate subjected to a plane stress tensor field $\boldsymbol{\eta}_3$.}
\label{FIG:NVB_J_CC_O_sq_k=2_Estabilizado}
\end{center}
\end{figure}

\subsection{Clamped L-shaped plate.}
\label{Sec:CC_buckling_L}

In this numerical test, we consider an L-shaped domain: $\Omega_L$.
We have used triangular and concave meshes as those
shown in $\CT_h^3$ and $\CT_h^4$, respectively (see Figure~\ref{FIG:Meshes_NEedges}).
Once again, the refinement parameter $N$ is the number of elements on each edge.

Table~\ref{NVB_I_CC_O_L_k=2} reports the four lowest non-dimensional buckling coefficient
computed with the method analyzed in this paper with polynomial degree $k=2$.
We include in this table orders of convergence,
as well as accurate values extrapolated by means
of a least-squares fitting again.  
In the last row of the table, we show the values obtained
by extrapolating those computed with
different method presented in \cite{MoRo2009}.

\begin{table}[ht]
\caption{Four lowest non-dimensional buckling coefficient of a clamped L-shaped plate and
subjected to a plane stress tensor field  $\boldsymbol{\eta}_1$.}
\label{NVB_I_CC_O_L_k=2}
\begin{center}
{\small\begin{tabular}{lllcccc}
\hline
\hline 
Mesh & $k$ &$N$& $\widehat{\lambda}_{1h}$ & $\widehat{\lambda}_{2h}$ & $\widehat{\lambda}_{3h}$ & $\widehat{\lambda}_{4h}$ \\	
\hline
\hline 
&& $32$ &13.1749&   15.0809&   17.0798&   19.9445\\
&& $64$&13.0847&   15.0234&   17.0203&   19.8758\\
$\CT_h^{3}$ &2& $128$&13.0495&   15.0083&   17.0042&   19.8582\\
&& Order	   &1.36   &   1.93   &   1.89   &   1.97   \\
&& Extrap.  &13.0271&   15.0029&   16.9983&   19.8522\\
\hline
&&$32$&13.1949 &  15.1399&   17.1801&   20.1590\\
&&$64$&13.0903 &  15.0388&   17.0453&   19.9297\\
$\CT_h^4$&2&$128$&13.0511 &  15.0124&   17.0105&   19.8717\\
&&Order&1.41    &  1.94   &   1.95   &   1.98  \\
&&Extrap.&13.0274 &  15.0031&   16.9983&   19.8519\\
\hline\rowcolor{Gray}
&&\cite{MoRo2009}&13.0290&15.0036&16.9949&-\\\hline 
\end{tabular}}
\end{center}
\end{table}

We observe that for the lowest non-dimensional buckling coefficient,
the method converges with order close to $1.089$, which is the expected one
because of the singularity of the solution (see \cite{G}).
For the other non-dimensional buckling coefficients, the method
converges with larger orders.

We show in Figure~\ref{FIG:NVB_I_CC_O_L_k=2_Estabilizado}
the buckling mode associated with the lowest scaled buckling coefficient.

\begin{figure}[ht]
\begin{center}
\begin{minipage}{5cm}
\centering\includegraphics[height=5cm, width=6cm]{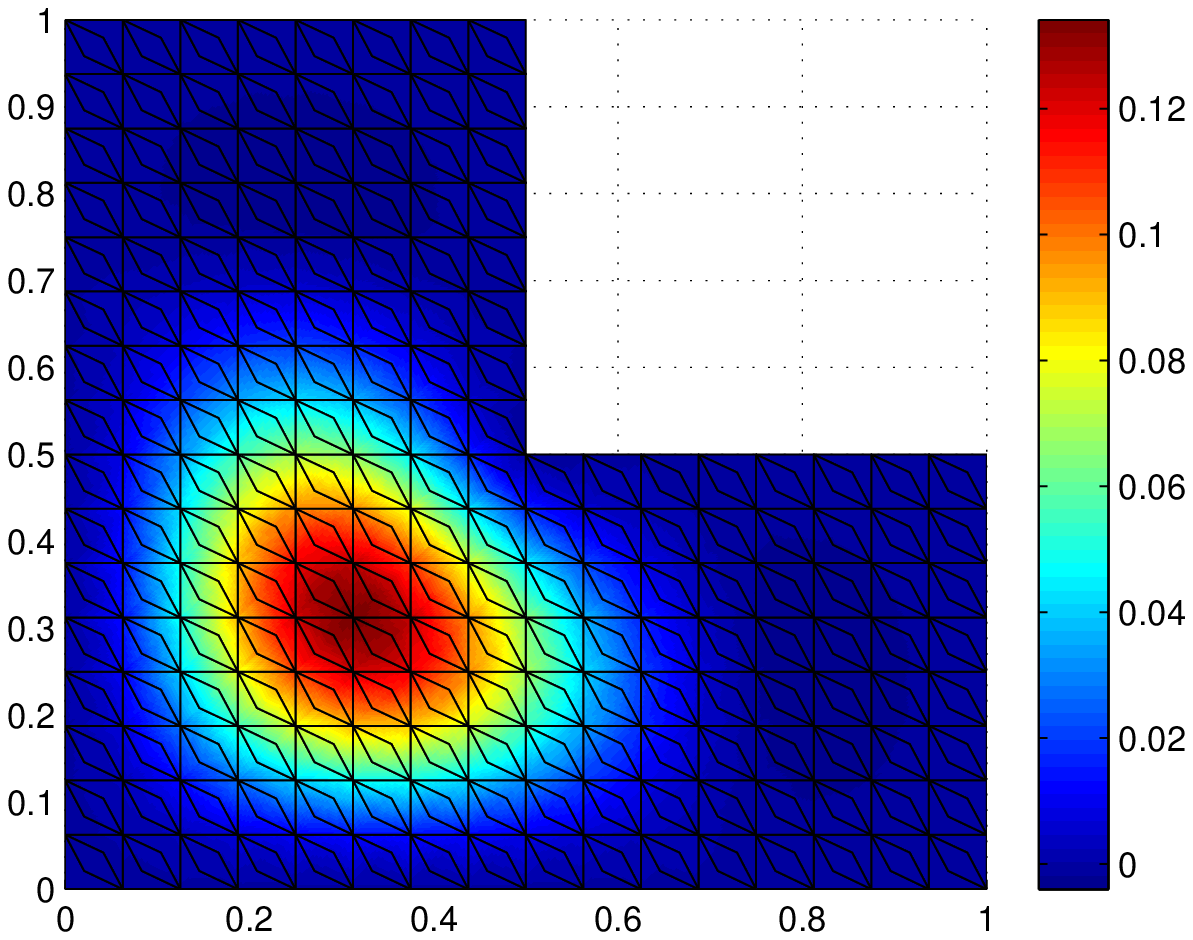}
\end{minipage}
\caption{Test 2. Buckling mode associated to the first non-dimensional buckling coefficient
of a clamped L-shaped plate subjected to a plane stress tensor field $\boldsymbol{\eta}_1$.}
\label{FIG:NVB_I_CC_O_L_k=2_Estabilizado}
\end{center}
\end{figure}

\subsection{Simply supported-free square plate.}
\label{Sec:SF_buckling}

In this final test, which is not covered by our
theory since our theoretical results has been
developed only for clamped plates,
we have computed the non-dimensional
buckling coefficient of a simply supported-free
square plate, subjected to linearly varying
in-plane load in one direction ($x$ direction).
This corresponds to a plane stress field given by
\begin{equation}\label{eta_variable}
\widetilde{\boldsymbol{\eta}}_2:=\begin{pmatrix}
1-\alpha \frac{y}{L}   & 0\\0&0
\end{pmatrix},
\end{equation}
with values of $\alpha$ in $\{0,2/3,1,4/3,2 \}$.
We observe that for $\alpha=0$, we obtain the plane
stress tensor field $\boldsymbol{\eta}_2$.

We take an square plate $\Omega_S$ which
has two simply supported edges and two free edges.

We report in Table~\ref{ClamFree_alfa_k=2}
the non-dimensional buckling coefficient.
The polynomial degrees are given by $k=2,3$ and
the family of meshes $\CT_h^2$ with $N=32,64,128$.
The table includes computed orders of convergence and
extrapolated more accurate values of each eigenvalue
obtained by means of a least-squares fitting.

\begin{table}[ht]
\caption{Non-dimensional buckling coefficient $\widehat{\lambda}_{1h}$ 
for different values of $\alpha$ of a
square plate with mixed boundary conditions and subjected to
linearly varying in-plane load in one direction $\widetilde{\boldsymbol{\eta}}_2$.}
\label{ClamFree_alfa_k=2}
\begin{center}
{\small\begin{tabular}{lllccccc}
\hline
\hline 
Mesh & $k$ & $N$ &$\alpha=0$ & $\alpha=2/3$ & $\alpha=1$ & $\alpha=4/3$ & $\alpha=2$ \\	
\hline
\hline 
		 & &32     &0.9984&    1.4474&    1.7763&  2.1687&  3.0676\\
		 & &64     &0.9996&    1.4490&    1.7782&  2.1709&  3.0702\\
$\CT_h^2$&2&128    &0.9999&    1.4495&    1.7787&  2.1715&  3.0710\\
		 & &Order  &1.91  &    1.90  &    1.90  &   1.88 &  1.85  \\
		 & &Extrap.&1.0000&    1.4496&    1.7789&  2.1717&  3.0713\\		 
\hline 
		 & &32     &1.0000&    1.4496&    1.7789&  2.1717&  3.0712\\
		 & &64     &1.0000&    1.4496&    1.7789&  2.1717&  3.0712\\
$\CT_h^2$&3&128	   &1.0000&    1.4496&    1.7789&  2.1717&  3.0712\\
		 & &Order  &4.00  &    4.00  &    4.00  &   4.00 &  4.00  \\
		 & &Extrap.&1.0000&    1.4496&    1.7789&  2.1717&  3.0712\\
\hline
\end{tabular}}
\end{center}
\end{table}

It can be clearly observed from Table~\ref{ClamFree_alfa_k=2}
that the proposed virtual scheme computes the scaled buckling coefficient
(cf. \eqref{scaled}) with an optimal order of convergence for all the values
of $\alpha$.

Finally, we show in Figure~\ref{FIG:Square01_n=16_hola}
the buckling mode associated with the lowest scaled buckling coefficient
for different values of the parameter $\alpha$.

\begin{figure}[ht]
\begin{center}
\begin{minipage}{16cm}
\centering\includegraphics[height=4cm, width=5cm]{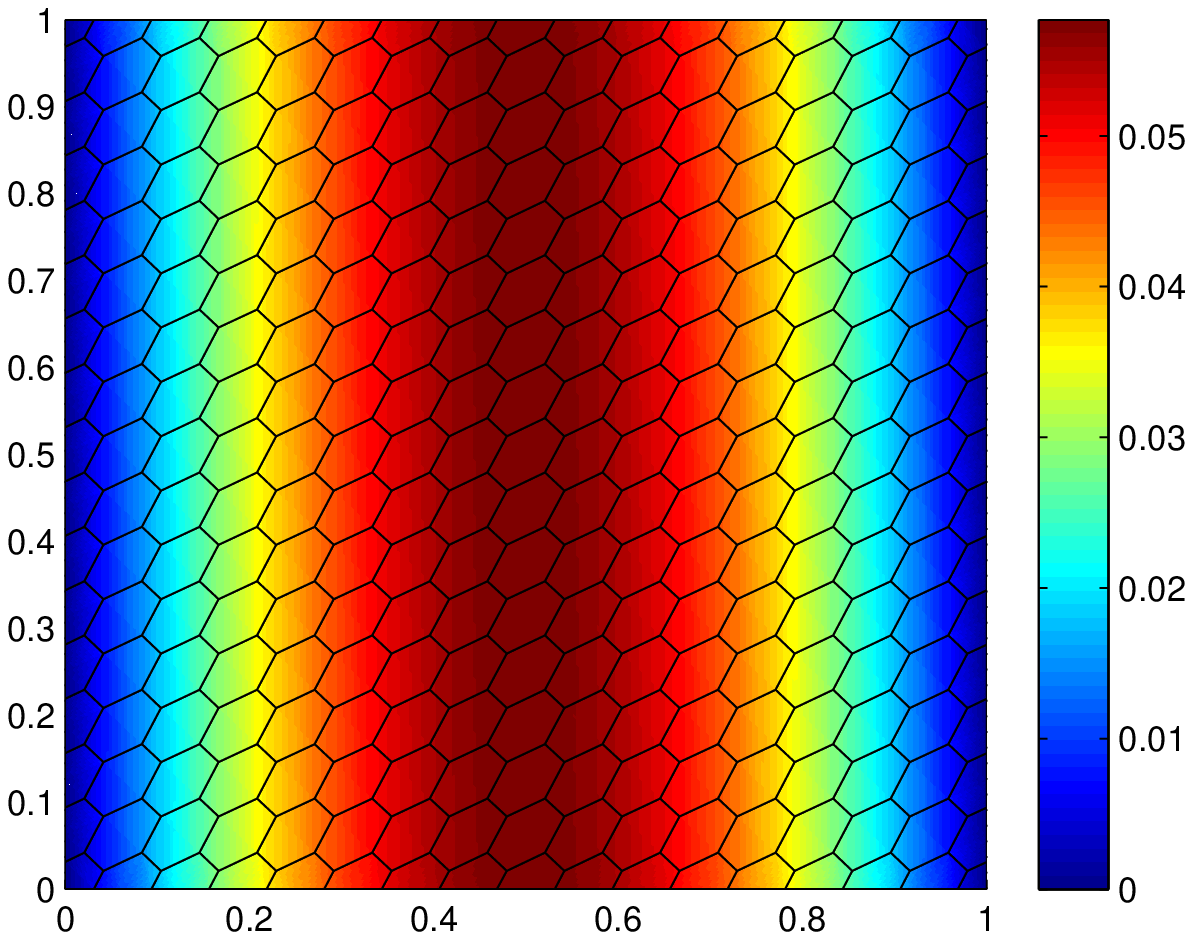}
\centering\includegraphics[height=4cm, width=5cm]{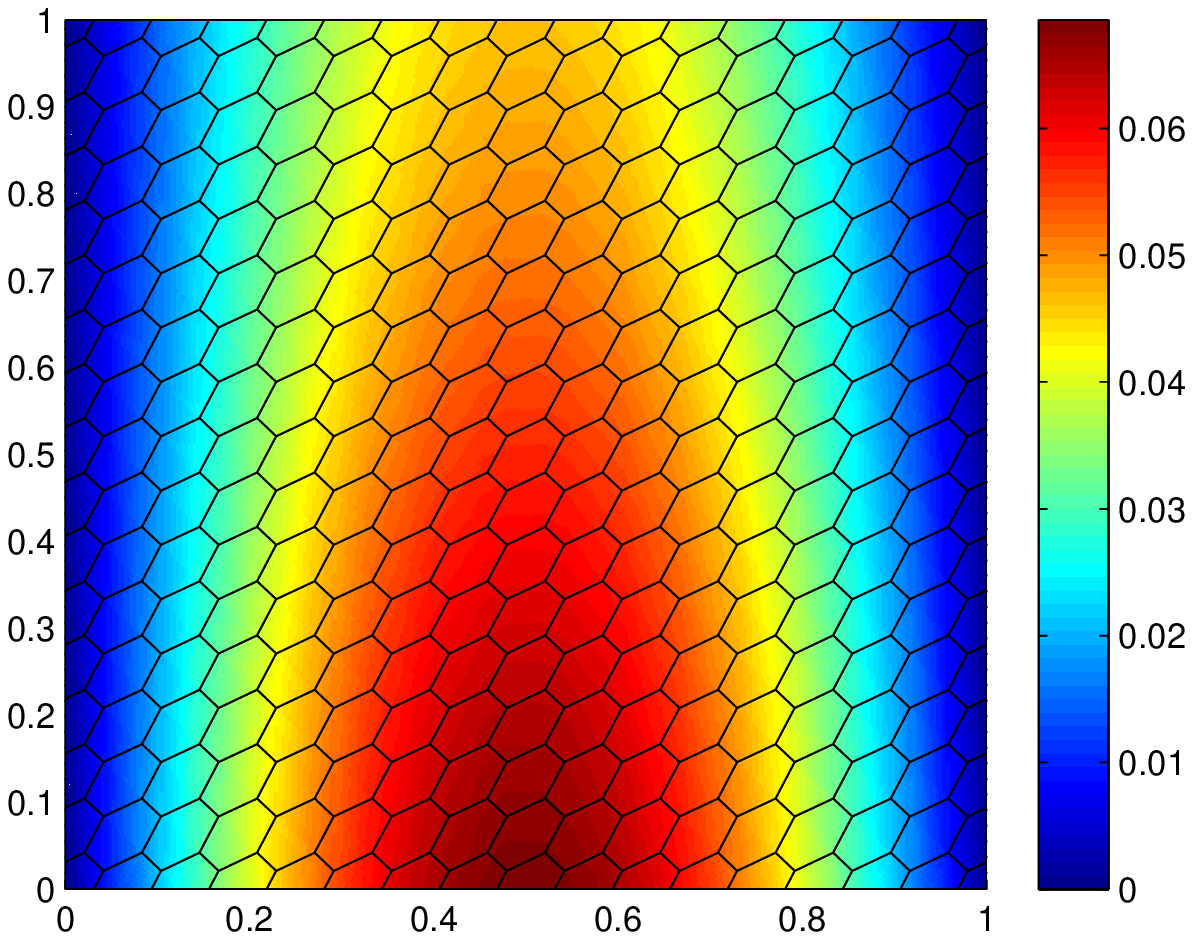}
\centering\includegraphics[height=4cm, width=5cm]{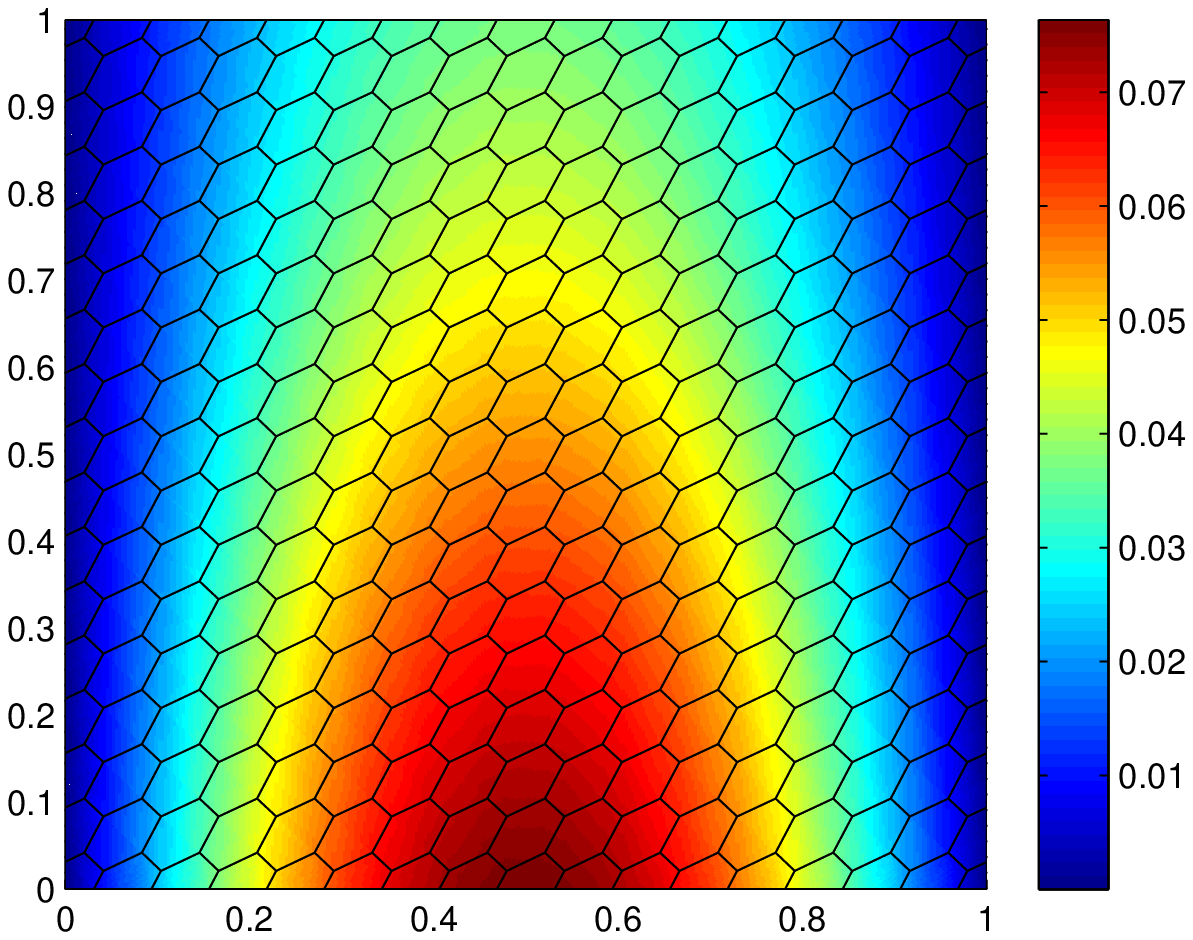}
\end{minipage}
\begin{minipage}{16cm}
\centering\includegraphics[height=4cm, width=5cm]{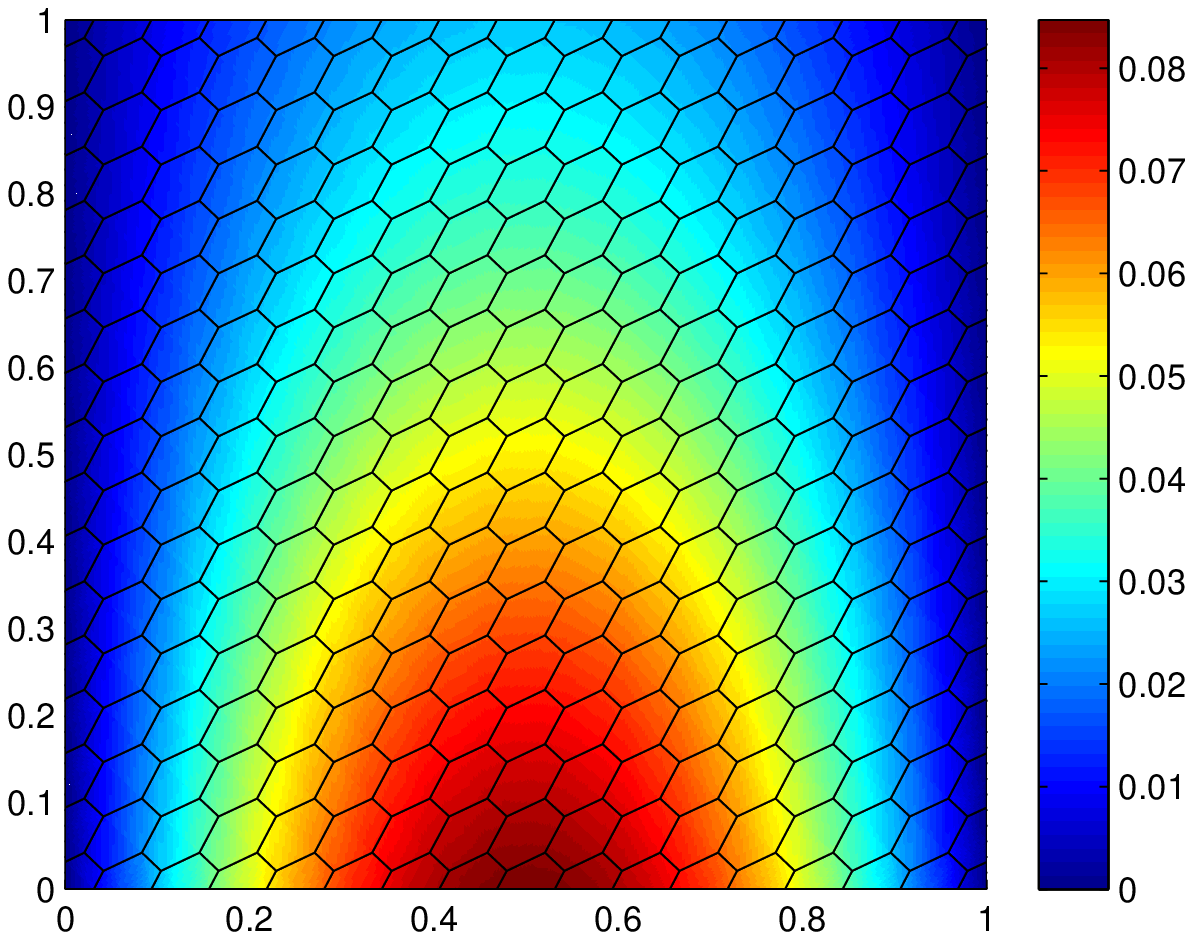}
\centering\includegraphics[height=4cm, width=5cm]{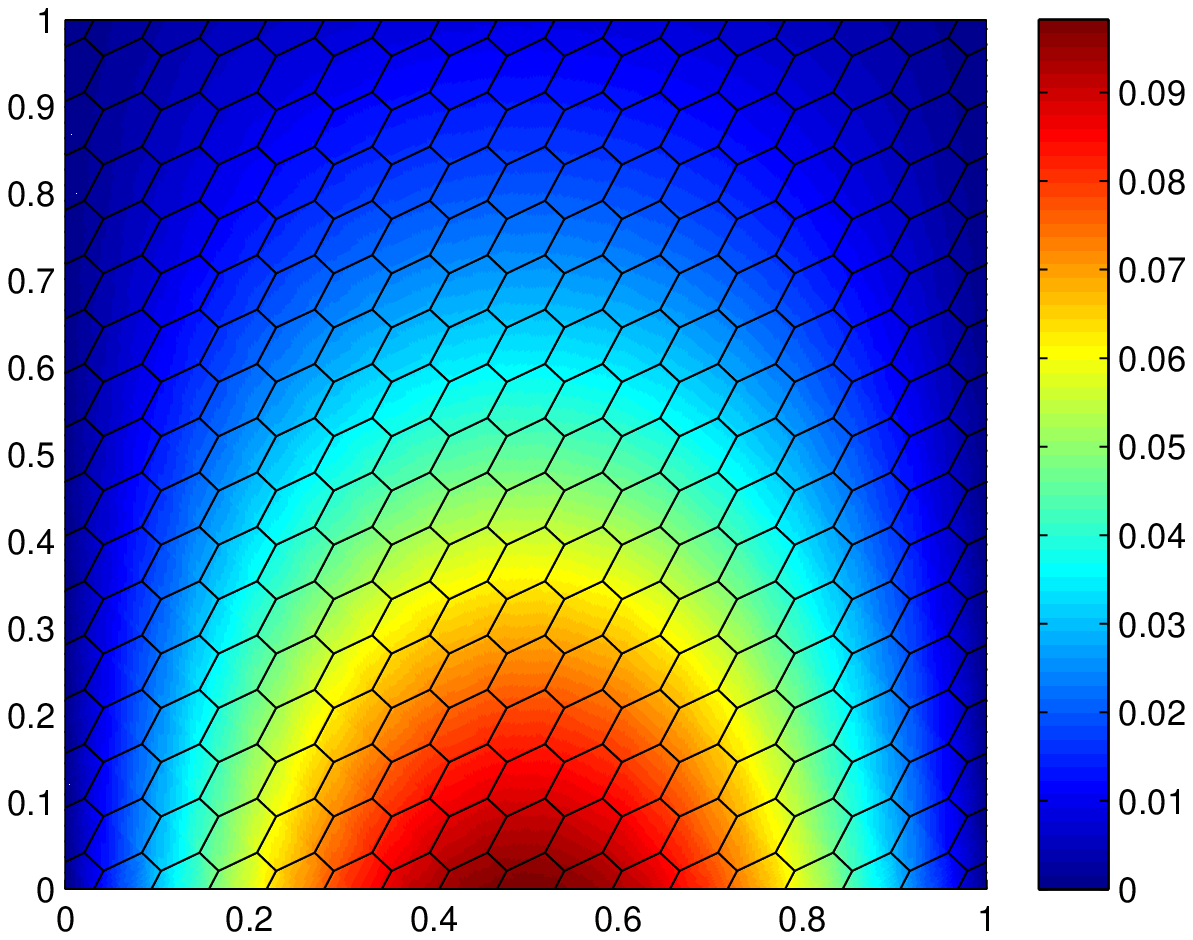}
\end{minipage}
\caption{Test 3. Buckling modes associated to the first non-dimensional buckling coefficient
$\widehat{\lambda}_{1h}$ of a square plate with mixed boundary conditions and subjected to
linearly varying in-plane load in one direction $\widetilde{\boldsymbol{\eta}}_2$ (cf. \eqref{eta_variable}):
$\alpha=0$ (top left), $\alpha=2/3$ (top middle),
$\alpha=1$ (top right), $\alpha=4/3$ (bottom left), $\alpha=2$ (bottom right).}
\label{FIG:Square01_n=16_hola}
\end{center}
\end{figure}

\section*{Acknowledgments}

The First author was partially supported by
CONICYT-Chile through FONDECYT project 1180913 and by project AFB170001
of the PIA Program: Concurso Apoyo a Centros Cient\'ificos y Tecnol\'ogicos
de Excelencia con Financiamiento Basal.
The second author was partially supported by a CONICYT-Chile
fellowship.

\bibliographystyle{amsplain}

\end{document}